\documentclass[11pt,amsfonts]{article}
\usepackage[utf8]{inputenc}
\usepackage[T1]{fontenc} 
\usepackage{tikz}
\usepackage[english]{babel} 
\usepackage{mathrsfs}
\usepackage{amssymb}
\usepackage[cmex10]{amsmath}
\interdisplaylinepenalty=2500
\usepackage{amsthm}
\usepackage{wrapfig}
\usepackage{latexsym}
\usepackage{amsfonts}
\usepackage{color}
\usepackage[font=small, labelfont=bf]{caption}
\setcounter{secnumdepth}{4}
\usepackage{ctable}
\usepackage{microtype}
\usepackage{floatrow}
\usepackage[colorlinks=true, citecolor=red, linkcolor=blue]{hyperref}
\usepackage{graphicx}
\usepackage{subfig}
\usepackage{verbatim}
\usepackage{epstopdf}
\usepackage{float}
\usepackage[symbol]{footmisc}
\usepackage{lipsum}

\usepackage[top=2cm , bottom=2cm , left=2.5cm ,right=2.5cm ]{geometry}

\usepackage{dsfont}

\usepackage{cases}

\theoremstyle{plain}
\newtheorem{definition}{Definition}
\newtheorem{theorem}{Theorem}
\newtheorem{corollary}{Corollary}

\newtheorem{example}{Example}
\newtheorem{lemma}{Lemma}

\newtheorem{remark}{Remark}

\numberwithin{equation}{section}
\numberwithin{theorem}{section}
\numberwithin{proposition}{section}
\numberwithin{definition}{section}
\numberwithin{remark}{section}
\numberwithin{corollary}{section}
\numberwithin{lemma}{section}
\numberwithin{example}{section}

\usepackage{graphicx}
\usepackage{footmisc}

\makeatletter
\newcommand*\bigcdot{\mathpalette\bigcdot@{.5}}
\newcommand*\bigcdot@[2]{\mathbin{\vcenter{\hbox{\scalebox{#2}{$\m@th#1\bullet$}}}}}
\makeatother

\begin{document}

	\sloppy
	
	\begin{center}
		{\Large \textbf{On the Existence of Optimal Controls for Reflected McKean–Vlasov Stochastic Differential Equations}} \\[0pt]
		~\\[0pt]  Ayoub Laayoun, Badr Missaoui 
        \renewcommand{\thefootnote}{}

        \footnotetext{Moroccan Center for Game Theory, Mohammed VI Polytechnic University, Rabat, Morocco.}
        \footnotetext{E-mails: \text{ayoub.laayoun@um6p.ma, badr.missaoui@um6p.ma}}
        	\renewcommand{\thefootnote}{\arabic{footnote}}

	\end{center}
	
	\renewcommand{\thefootnote}{\arabic{footnote}}

\begin{abstract}
The existence of optimal controls is a central issue in control theory and underpins many of its further developments. This paper studies optimal control problems for systems governed by  Reflected McKean-Vlasov Stochastic Differential Equations. Using the compactification method, we establish the existence of an optimal relaxed control. Under suitable convexity assumptions, we further demonstrate that this relaxed optimal control can be attained by a strict control. In addition, we show that, in general, any optimal relaxed control can be approximated by a sequence of strict controls.
\end{abstract}

{\bf Keywords}: Reflected McKean-Vlasov SDEs, Relaxed control, Skorokhod representation theorem, approximation.\\
\textbf{AMS Subject Classiﬁcation:}  93E20, 60G07, 60H20.

\section{Introduction }
\label{sec1}
In this paper, we investigate the existence of optimal controls for systems whose dynamics are governed by a Reflected McKean-Vlasov SDEs (RMVSDE). Specifically, we consider a state process $ X_t $ that evolves according to  
\begin{equation} \label{eq1.1}
\begin{cases}
X_t = x + \displaystyle\int_0^t b(s, X_s, \mathcal{L}_{X_s}, u_s) \, \mathrm{d}s + \displaystyle\int_0^t \sigma(s, X_s, \mathcal{L}_{X_s}) \, \mathrm{d}B_s + K_t, & t \in [0, T], \\[8pt]
X_t \ge 0 \quad \text{a.s.}, \\[4pt]
\displaystyle\int_0^T X_t \, \mathrm{d}K_t = 0 \quad \text{a.s.},
\end{cases}
\end{equation}
on a filtered probability space $ (\Omega, \mathcal{F}, \{\mathcal{F}_t\}, \mathbb{P}) $. Here, $\mathcal{L}_{X_t}$ denotes the distribution of $X_t$, $ B_t $ is a standard Brownian motion, $ x \ge 0 $ denotes the initial condition, and the functions $ b $ and $ \sigma $ specify the drift and diffusion coefficients, respectively. The process $ K_t $ acts as the reflecting term ensuring that $ X_t$  remains nonnegative. The control variable $ u = (u_t)_{t \in [0,T]} $, referred to as a \emph{strict control}, is an $ \mathcal{F}_t $-adapted measurable process taking values in a compact metric space $ A$.\\
The solution of \eqref{eq1.1} consists of a pair $ (X_t, K_t) $, where $ X_t $ is a nonnegative process and $ K_t $ is a continuous process that enforces the reflection of $ X_t $ within the domain $ [0, \infty) $. The process $ K_t $ increases only when $ X_t $ reaches the boundary at zero, thereby satisfying the Skorokhod condition $ \displaystyle\int_0^T X_t \, \mathrm{d}K_t = 0 $.

The cost function, which we aim to minimize, is defined by
\begin{equation}
    J(U) := \mathbb{E} \left[ \int_0^T \int_A f(s, X_s, \mathcal{L}_{X_s}, u_s) \, \mathrm{d}s + \int_0^T c(s, X_s, \mathcal{L}_{X_s}) \, \mathrm{d}K_s + g(X_T, \mathcal{L}_{X_T}) \right].
\end{equation}
An $ \mathcal{F}_t $-adapted control $ \hat{u} $ is called optimal if it minimizes $ J $, that is,
\begin{equation}
J(\hat{u}) = \inf_{u \in \mathcal{U}} J(u),
\end{equation}
where $ \mathcal{U} $ denotes the class of admissible controls, consisting of adapted processes with values in the action space $ A $.

In the classical framework, where the state process $ X $ evolves according to a stochastic differential equation (SDE) without reflection, stochastic control problems have been widely investigated over the past two decades using both the dynamic programming approach and the Pontryagin maximum principle (see, e.g., \cite{PengSMP, becker, Kushner-Nec, bism2, Elliot1, BMK, el karoui-Dyn, bism1, haussman1}). A well-known difficulty in this setting is that an optimal control may fail to exist within the standard class of admissible controls when the Filippov convexity condition is not satisfied. This issue stems from the fact that the admissible control set $ \mathcal{U} $ is often too restrictive, typically noncompact, so it may not contain a minimizer.
To overcome this limitation, an extended class of \emph{relaxed controls} $ \mathcal{R} $ is introduced. In this formulation, the controller is allowed, at each time $ t $, to select a probability measure $ q_t(\mathrm{d}a) $ over the control set $A $, rather than a single deterministic control $ u_t \in A $. The existence of optimal relaxed controls for SDEs with an uncontrolled diffusion coefficient was first established through compactification techniques (see \cite{kushner1, haussman-lep}). Later, El Karoui et al.~\cite{elkaroui1} extended these results to SDEs with a controlled diffusion coefficient. By reformulating the problem as a martingale problem on an appropriate canonical space, they introduced the relaxed control framework and proved a strong approximation theorem, showing that the relaxed control problem can be approximated by a sequence of strict control problems. Following this line of work, Mezerdi~\cite{Amin} established the existence of optimal controls for controlled McKean--Vlasov SDEs, using similar compactification methods as in the aforementioned studies.

The problem of controlling reflected stochastic differential equations (SDEs) arises naturally in various applications, such as queueing systems, where reflection represents physical or operational constraints. In the Markovian setting, this problem has been studied extensively, as in \cite{ContRef2}, where the control acts on the drift of the process and the objective is to minimize the long-run average cost---an ergodic or stationary control problem. In contrast, the non-Markovian framework considered in \cite{ContRefl1} addresses the stochastic optimal control of reflected path-dependent SDEs using the dynamic programming principle. There, the associated value function is characterized by a backward stochastic partial differential equation (BSPDE) with Neumann boundary conditions, which in turn provides the foundation for constructing the optimal feedback control. Related developments also appear in \cite{Ferrari}, which studies singular stochastic control problems for reflected diffusions.

The \emph{reflected McKean--Vlasov optimal control problem} plays an important role in many practical contexts. Several key models in finance, biology, cybernetics, and other applied fields naturally lead to such problems. The question of the \emph{existence of an optimal control} for reflected McKean--Vlasov systems has attracted growing attention. In \cite{Fangfang}, the authors studied the stochastic control problem for a one-dimensional reflected McKean--Vlasov SDE, characterized the associated value function through a backward stochastic partial differential equation (BSPDE), and explicitly constructed the optimal control. In \cite{Shao}, the stochastic control problem for a multi-dimensional reflected McKean--Vlasov SDE was analyzed using the theory of viscosity solutions to Hamilton--Jacobi--Bellman (HJB) equations on the Wasserstein space, formulated in terms of the intrinsic derivative. The author proved that the value function is a viscosity solution to the corresponding HJB equation on the Wasserstein space.\\
The two approaches developed in \cite{Fangfang} and \cite{Shao} rely on several structural assumptions. In both works, the diffusion coefficient $ \sigma $ satisfies a super-parabolicity condition. In \cite{Fangfang}, the functions $ f$, $c $, and $ g $ are assumed to be locally Lipschitz continuous in $ (x, \mu) $, while in \cite{Shao}, the drift coefficient $ b $ is Lipschitz continuous in $ (x, u, \mu) $, and the functions $ f $ and $ g$ are Lipschitz continuous in $ (x, u) $.

In the present paper, we adopt the \emph{compactification method} introduced in \cite{kushner1, haussman-lep, Amin} to establish the \emph{existence of an optimal relaxed control} for systems governed by one-dimensional reflected McKean--Vlasov SDEs. Our approach does not require the restrictive assumptions imposed in the aforementioned works; it only relies on the \emph{Lipschitz continuity} of $ b $ , $ \sigma $ and $f$ in $(x, \mu)$. Moreover, when the \emph{Roxin convexity condition} is satisfied, we show that the optimal relaxed control coincides with a strict control. Finally, by applying the \emph{chattering lemma}, we demonstrate that the optimal relaxed control can be approximated by a sequence of strict controls.

The paper is organized as follows. Section~\ref{sec1} reviews the concepts of strict and relaxed controls for the RMVSDE system, along with the preliminaries and assumptions underlying the model. Section~\ref{sec2} presents the main results, while Section~\ref{sec3} is devoted to the proofs.

\section{Formulation of the problem, notations and assumptions}\label{sec1}

Throughout this paper, we denote by $\mathcal{C}([0,T], \mathbb{R}^+)$ the space of continuous functions from $[0,T]$ into $\mathbb{R}^+$, equipped with the topology of uniform convergence. 

We also denote by $\mathcal{P}_2(\mathbb{R}^+)$ the space of probability measures on $\mathbb{R}^+$ with finite second-order moment, endowed with the topology induced by the 2-Wasserstein distance. For any $\mu, \nu \in \mathcal{P}_2(\mathbb{R}^+)$, the 2-Wasserstein distance $W_2(\mu, \nu)$ is defined by  
\begin{equation*}
W_2(\mu, \nu) = \inf_{\pi \in \Pi(\mu, \nu)} \left( \int_{\mathbb{R}^+ \times \mathbb{R}^+} |x - y|^2 \, d\pi(x, y) \right)^{1/2},
\end{equation*}
where $\Pi(\mu, \nu)$ denotes the set of probability measures on $\mathbb{R}^+ \times \mathbb{R}^+$ whose first and second marginals are $\mu$ and $\nu$, respectively.  

Moreover, if $\mu = P_X$ and $\nu = P_Y$ are the distributions of $\mathbb{R}^+$-valued random variables $X$ and $Y$ with finite second moments, then  
\begin{equation*}
W_2(\mu, \nu)^2 \leq \mathbb{E}\,|X - Y|^2.
\end{equation*} 
\subsection{Strict control}
We introduce the concept of strict control corresponding to system \eqref{eq1.1}.

\begin{definition}\label{def of Strict Control}
We call an admissible strict control a set $\alpha = (\Omega,\mathcal{F},\mathbb{F},\mathbb{P}, B, u, X, K)$ such that
\begin{itemize}
\item[i)]  $(\Omega,\mathcal{F},\mathbb{P})$ is a probability space equipped with a filtration $\mathbb{F}=(\mathcal{F}_t)_{t\geq 0}$;
\item[ii)] $u$ is a $A$-valued process, $\mathbb{F}$-progressively measurable;
\item[iii)]  $ B$  is a standard Brownian motion on the filtered probability space $(\Omega, \mathcal{F}, \mathbb{F}, \mathbb{P})$;
\item[iv)] $X$ and $K$ are an $\mathbb{F}$-adapted processes in $\mathbb{C}([0,T], \mathbb{R}^{+})$ satisfies the following;
\begin{equation} \label{eq1}
\begin{cases}
X_t = x + \displaystyle\int_0^t b(s, X_s,\mathcal{L}_{X_s}, u_s) \, \mathrm{d}s + \displaystyle\int_0^t \sigma(s, X_s, \mathcal{L}_{X_s}) \, \mathrm{d}B_s + K_t, & t \in [0, T], \\[8pt]
X_t \ge 0 \quad \text{a.s.}, \\[4pt]
\displaystyle\int_0^T X_t \, \mathrm{d}K_t = 0 \quad \text{a.s.}
\end{cases}
\end{equation}
\end{itemize}
We denote by $\mathcal{U}$ the set of all admissible controls.
\end{definition}
The cost functional corresponding to a control $\alpha \in \mathcal{U}$ is defined as the following
\begin{equation*}
J(\alpha) := \mathbb{E}^{\mathbb{P}}\left(\int_0^T  f(s, X_s, \mathcal{L}_{X_s}, u_s)\mathrm{d}s + \int_0^T c(s, X_s, \mathcal{L}_{X_s})\mathrm{d}K_s + g(X_T, \mathcal{L}_{X_T})\right)
\end{equation*} 
The goal is to minimize the cost functional $J$ over the set of admissible controls \( \mathcal{U} \); that is, to find an optimal control $ \alpha^* \in \mathcal{U} $ such that
$$
J(\alpha^*) = \min_{u \in \mathcal{U}} J(u).
$$

Let us assume the following conditions:
\begin{description}
\item[(A1)] Assume that the functions
\begin{equation*}
b : [0, T] \times \mathbb{R} \times \mathcal{P}_2(\mathbb{R}^+) \times A  \to \mathbb{R},\quad
\sigma : [0, T] \times \mathbb{R} \times \mathcal{P}_2(\mathbb{R}^+)\to \mathbb{R}^m
\end{equation*}
are continuous. Moreover, assume that there exists a constant $ C_1 > 0 $ such that for every  $(t, x,\mu, a) \in [0, T] \times \mathbb{R} \times \mathcal{P}_2(\mathbb{R}^+)\times A $:
\begin{equation}
|b(t, x,\mu, a)|^2 + |\sigma\sigma^*(t, x, \mu)|   \leq C_1 \left (1 + |x|^2+ \int_{\mathbb{R}^+}^{} |y|^2 \mu(\mathrm{d}y) \right) . \\
\end{equation}
\item[(A2) ] There exists a constant $ C_2 > 0 $ such that for every $ (t,a) \in [0, T] \times A$, every $ x, x' \in \mathbb{R}$ and every $ \mu, \mu' \in \mathcal{P}_2(\mathbb{R}^+)$:
\begin{equation*}
|b(t, x,\mu, a) - b(t, x',\mu' a)|+|\sigma(t, x) - \sigma(t, x') |  \leq C_2\left(|x - x'|+W_2(\mu, \mu') \right).
\end{equation*}
\item[(A.3)] 
The functions
\begin{equation*}
f : [0, T] \times \mathbb{R}^+ \times \mathcal{P}_2(\mathbb{R}^+) \times A \to \mathbb{R}, 
\quad 
h : [0, T] \times \mathbb{R}^+ \times \mathcal{P}_2(\mathbb{R}^+) \to \mathbb{R}, 
\quad 
g : \mathbb{R}^+ \times \mathcal{P}_2(\mathbb{R}^+) \to \mathbb{R}
\end{equation*}
are continuous. Moreover, there exists a constant $ C_3 > 0$  such that, for all 
$ (t, x,x' ,\mu,\mu', a) \in [0, T] \times \mathbb{R}^+ \times \mathbb{R}^+\times \mathcal{P}_2(\mathbb{R}^+) \times \mathcal{P}_2(\mathbb{R}^+) \times A $ ,
the following  conditions holds:
\begin{equation*}
|f(t, x, \mu, a)| + |h(t, x, \mu)| + |g(x, \mu)| 
\le C_3 \left( 1 + |x|^2 + \int_{\mathbb{R}^+} |y|^2 \, \mu(\mathrm{d}y) \right).
\end{equation*} 
\begin{equation*}
|f(t, x, \mu, a)-f(t, x', \mu', a)| \leq C_3\left(|x-x'|+W_{2}(\mu, \mu')\right)
\end{equation*}
\end{description}

\begin{remark}
\begin{itemize}
\item
The set $\mathcal{U}$ is nonempty. Indeed, let $(\Omega, \mathcal{F}, \mathbb{F}, \mathbb{P})$ be a filtered probability space, and let $B$ be a standard Brownian motion defined on this space.  
Under Assumptions~$(\mathbf{A_1})$ and~$(\mathbf{A_2})$, Theorem~3.5 in~\cite{Adams} (for the domain $D = [0, \infty)$) guarantees that, for any fixed constant control $u_0 \in A$, the following system:
\begin{equation*}
\begin{cases}
X_t = x + \displaystyle\int_0^t b(s, X_s, \mathcal{L}_{X_s}, u_0) \, \mathrm{d}s 
      + \displaystyle\int_0^t \sigma(s, X_s, \mathcal{L}_{X_s}) \, \mathrm{d}B_s 
      + K_t, & t \in [0, T], \\[8pt]
X_t \ge 0, \quad \text{a.s.}, \\[4pt]
\displaystyle\int_0^T X_t \, \mathrm{d}K_t = 0, \quad \text{a.s.}
\end{cases}
\end{equation*}
admits a unique strong solution $(X^{u_0}, K^{u_0})$.\\  
Define the constant control process $u_t := u_0$ for all $t \in [0, T]$. Then the tuple $(\Omega, \mathcal{F}, \mathbb{F}, \mathbb{P}, B, u, X^{u_0}, K^{u_0})$ belongs to the set of admissible controls $\mathcal{U}$. Hence, $\mathcal{U}$ is nonempty.
\item
Moreover, by Theorem~3.5 in~\cite{Adams} and under Assumptions~$(\mathbf{A_1})$–$(\mathbf{A_3})$, the function $J$ is well-defined.
\item In general, the existence of an optimal strict control may fail even under assumptions \textbf{(A\(_1\)–A\(_3\))} (see Example~\ref{Example3}). In such cases, the set of strict controls is extended to a broader class of measure-valued controls, known as \emph{relaxed controls}, whose definition will be given in the next subsection.
\end{itemize}
\end{remark}
\subsection{Relaxed control}
The concept of \emph{relaxed control} consists in replacing the $A$-valued process $(u_t)$ by a $P(A)$-valued process $(q_t)$, where $P(A)$ denotes the space of probability measures on $A$, endowed with the topology of weak convergence.

\begin{itemize}
\item We denote by $\mathcal{V}$ the set of probability measures $q$ on $[0, T] \times A$ satisfying
\begin{equation*}
q([0, t] \times A) = t, \quad \forall\, t \in [0, T].
\end{equation*}
\item A \emph{relaxed control} can therefore be viewed as a random variable taking values in $\mathcal{V}$ (\cite{elkaroui2}).
\item Endowed with the topology of stable convergence of measures, the space $\mathcal{V}$ is compact and metrizable, as established by Jacod and Mémin \cite{jacod1}.
\end{itemize}

We now extend Definition~\ref{def of Strict Control} to the framework of relaxed controls.
\begin{definition}\label{def of relaxed control}
A tuple $r = (\Omega, \mathcal{F}, \mathbb{F}, \mathbb{P}, B, q, X, K)$ is called \emph{an admissible relaxed control} if it satisfies Conditions $i)$, and $iii)$ of Definition~\ref{def of Strict Control}, and the following:

\begin{itemize}
    \item[ii')] $q$ is $\mathcal{V}$-valued random variable such that for each $t\in[0,T]$, $\mathds{1}_{]0,t]}q$ is $\mathcal{F}_t$- measurable;
    \item[iv')] $X$ and $K$ are an $\mathbb{F}$-adapted processes in $\mathbb{C}([0,T], \mathbb{R}^+)$ satisfy the following
\begin{equation} \label{def of Relaxed}
\begin{cases}
\mathrm{d}X_t =\displaystyle\int_A b(t, X_t,\mathcal{L}_{X_t}, a) q_t(\mathrm{d}a)\mathrm{d}t +  \sigma(t, X_t, \mathcal{L}_{X_t}) \mathrm{d}B_t + \mathrm{d} K_t, & t \in [0, T], \\
X_t \geq 0 \quad \text{a.s}, \\
\displaystyle\int_0^T X_t \mathrm{d}K_t = 0 \quad \text{a.s.}
\end{cases}
\end{equation}
\end{itemize}
We denote by $\mathcal{R}$ the set of all relaxed controls.
\end{definition}
\begin{remark}
Under Assumptions $(\mathbf{A_1} - \mathbf{A_2})$, the set $\mathcal{R}$ of relaxed controls is nonempty. In fact, by choosing $q_t = \delta_{a_0}$ for some fixed $a_0 \in A$ and applying Theorem 3.5 from~\cite{Adams}, the existence of such a relaxed control follows immediately.
\end{remark}

The cost functional corresponding to a relaxed control $r \in \mathcal{R}$ is defined as follows:
\begin{equation*}
J(r) := \mathbb{E}^{\mathbb{P}}\left(\int_0^T \int_{A} f(s, X_s, \mathcal{L}_{X_s}, a) q_s(\mathrm{d}a)\mathrm{d}s + \int_0^T c(s, X_s, \mathcal{L}_{X_s})\mathrm{d}K_s + g(X_T, \mathcal{L}_{X_T})\right).
\end{equation*} 
\section{The main results}\label{sec2}
In this section, we present the main results of the paper: the existence of an optimal relaxed control (Theorem~\ref{theo1}), the existence of a strict optimal control under a convexity condition (Theorem~\ref{theo2}), and approximation results in the general case (Theorem~\ref{approximatio1}).
  
\begin{theorem} \label{theo1}
    Under assumptions $\textbf{(A.1)-(A.3)}$, the relaxed control problem has an optimal solution.
\end{theorem}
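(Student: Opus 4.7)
The plan is to implement the classical compactification method of Kushner, El Karoui and Mezerdi in the reflected mean-field setting: take a minimizing sequence of admissible relaxed controls, prove tightness of the associated laws on a suitable Polish space, apply Prokhorov and Skorokhod to extract an almost surely convergent subsequence on a new probability space, identify the limit as a bona fide relaxed control, and finally conclude by continuity of the cost.

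More precisely, I would start from a sequence $r^n = (\Omega^n,\mathcal{F}^n,\mathbb{F}^n,\mathbb{P}^n,B^n,q^n,X^n,K^n) \in \mathcal{R}$ with $J(r^n) \downarrow \inf_{r \in \mathcal{R}} J(r)$. Using the linear growth bound in \textbf{(A.1)}, the Skorokhod representation of the reflecting term, the Burkholder--Davis--Gundy inequality and Gronwall's lemma (as in Theorem~3.5 of \cite{Adams}), I would establish a uniform moment estimate $\sup_n \mathbb{E}\bigl[\sup_{t\le T}|X^n_t|^p + |K^n_T|^p\bigr] < \infty$ for some $p \ge 2$. This bound, combined with standard estimates on the increments of $X^n$ and the fact that $K^n$ is nondecreasing, yields tightness of the laws of $(X^n, K^n)$ in $\mathcal{C}([0,T], \mathbb{R}^+) \times \mathcal{C}([0,T], \mathbb{R}^+)$ via the Kolmogorov--Aldous criterion. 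Tightness of $B^n$ is automatic and tightness of $q^n$ follows from the compactness of $\mathcal{V}$ under stable convergence recalled in Jacod--Mémin \cite{jacod1}. Prokhorov plus Skorokhod then produce, on some $(\tilde\Omega,\tilde{\mathcal{F}},\tilde{\mathbb{P}})$, copies $(\tilde B^n, \tilde q^n, \tilde X^n, \tilde K^n)$ converging a.s.\ to a limit $(\tilde B, \tilde q, \tilde X, \tilde K)$.

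The core of the argument is to show that this limit, equipped with the filtration it generates (suitably augmented), defines an admissible relaxed control. The uniform $L^p$ bound propagates to the limit, and uniform integrability together with $\tilde X^n_t \to \tilde X_t$ a.s.\ yields $W_2(\mathcal{L}_{\tilde X^n_t}, \mathcal{L}_{\tilde X_t}) \to 0$ for every $t$. Assumption \textbf{(A.2)} then allows me to pass to the limit in $\sigma(s,\tilde X^n_s, \mathcal{L}_{\tilde X^n_s})$, and after arguing that the stochastic integrals converge in probability (the standard $L^2$ continuity of the Itô integral combined with the moment bounds), I obtain the diffusion term in the limit. For the drift, the continuity of $b$ together with the stable convergence of $\tilde q^n$ to $\tilde q$ gives
\begin{equation*}
\int_0^t \!\!\int_A b(s,\tilde X^n_s, \mathcal{L}_{\tilde X^n_s}, a)\,\tilde q^n_s(\mathrm{d}a)\,\mathrm{d}s \;\longrightarrow\; \int_0^t \!\!\int_A b(s,\tilde X_s, \mathcal{L}_{\tilde X_s}, a)\,\tilde q_s(\mathrm{d}a)\,\mathrm{d}s.
\end{equation*}
The nonnegativity $\tilde X \ge 0$ and the Skorokhod complementarity $\int_0^T \tilde X_t\,\mathrm{d}\tilde K_t = 0$ pass to the limit because the map $(y,k) \mapsto \int_0^T y_t\,\mathrm{d}k_t$ is continuous for the uniform topology on pairs $(y,k)$ with $k$ nondecreasing and $k_T$ bounded. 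Hence $\tilde r := (\tilde\Omega,\tilde{\mathcal{F}}, \tilde{\mathbb{F}}, \tilde{\mathbb{P}}, \tilde B, \tilde q, \tilde X, \tilde K) \in \mathcal{R}$.

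To conclude, the quadratic growth in \textbf{(A.3)} together with the uniform moment estimates provides uniform integrability of the cost integrands, and the a.s.\ convergence plus stable convergence of $\tilde q^n$ gives $J(\tilde r^n) \to J(\tilde r)$. Since laws are preserved, $J(\tilde r^n) = J(r^n) \to \inf_\mathcal{R} J$, so $\tilde r$ is optimal. I expect the main obstacle to be the treatment of the reflecting term $\tilde K$: establishing the $L^p$ bound on $K^n_T$ (which requires careful use of the Skorokhod map rather than an easy Gronwall), proving joint tightness of $(X^n,K^n)$, and, most delicately, preserving the complementarity condition $\int_0^T \tilde X_t\,\mathrm{d}\tilde K_t = 0$ in the limit despite the absence of a uniform total-variation bound on $K^n$ beyond what its monotonicity and the endpoint bound provide.
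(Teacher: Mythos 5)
Your proposal follows essentially the same compactification route as the paper: a minimizing sequence, uniform moment estimates via BDG and Gronwall, tightness of $(X^n,K^n,B^n)$ together with compactness of $\mathcal{V}$ for the relaxed controls, Skorokhod representation, identification of the limit equation (drift via stable convergence, complementarity via continuity of the pairing $(x,k)\mapsto\int_0^T x_t\,\mathrm{d}k_t$), and convergence of the cost by uniform integrability. The only cosmetic differences are that the paper works with fourth moments, handles the unbounded drift in the stable-convergence step by an explicit truncation at level $M$ combined with the Lipschitz property of $b$ in $(x,\mu)$, and invokes Gyöngy--Krylov for the stochastic-integral convergence — details your sketch leaves implicit but which fit naturally into your outline.
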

To address the existence of a strict optimal control, we need the Roxin condition, given by \\
\textbf{(A.4)} For every $(t,x,\mu) \in [0,T] \times \mathbb{R} \times \mathcal{P}_2(\mathbb{R}^+)$, the set 
\begin{equation}
    \mathcal{S}(t,x,\mu) := \{ (b(t,x,\mu,a),f(t,x,\mu,a)): \ a \in A \}
\end{equation}
is convex and closed in $\mathbb{R} \times \mathbb{R}$.\\
\begin{corollary}\label{theo2}
    If the assumptions $\textbf{(A.1)-(A.4)}$ hold, then, the relaxed control problem has a strict optimal control.
\end{corollary}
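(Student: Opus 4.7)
The strategy is to start from an optimal relaxed control $r^{*}=(\Omega,\mathcal{F},\mathbb{F},\mathbb{P},B,q^{*},X^{*},K^{*})\in\mathcal{R}$, whose existence is guaranteed by Theorem~\ref{theo1}, and to use the Roxin condition \textbf{(A.4)} together with a measurable selection argument to manufacture a strict control $u^{*}$ producing the same state trajectory and the same cost.

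The first step is the pointwise observation that for every $(t,\omega)$ the averaged vector
\begin{equation*}
\Phi(t,\omega) := \left(\int_{A} b(t,X^{*}_{t},\mathcal{L}_{X^{*}_{t}},a)\,q^{*}_{t}(\mathrm{d}a),\; \int_{A} f(t,X^{*}_{t},\mathcal{L}_{X^{*}_{t}},a)\,q^{*}_{t}(\mathrm{d}a)\right)
\end{equation*}
is a barycenter of the compact set $\mathcal{S}(t,X^{*}_{t},\mathcal{L}_{X^{*}_{t}})\subset\mathbb{R}\times\mathbb{R}$ (compact because $A$ is compact metric and $b,f$ are continuous by \textbf{(A.1)}, \textbf{(A.3)}). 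By \textbf{(A.4)}, this set is already closed and convex, hence contains its barycenter. Therefore there exists, at each $(t,\omega)$, some $a\in A$ realizing $\Phi(t,\omega)$.

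The second step is to upgrade this pointwise fact to a progressively measurable selection $u^{*}:[0,T]\times\Omega\to A$ satisfying
\begin{equation*}
\bigl(b(t,X^{*}_{t},\mathcal{L}_{X^{*}_{t}},u^{*}_{t}),\,f(t,X^{*}_{t},\mathcal{L}_{X^{*}_{t}},u^{*}_{t})\bigr)=\Phi(t,\omega)\quad \mathrm{d}t\otimes\mathrm{d}\mathbb{P}\text{-a.e.}
\end{equation*}
I would invoke a Filippov-type implicit function/selection theorem applied to the multifunction $\Gamma(t,\omega):=\{a\in A:(b,f)(t,X^{*}_{t},\mathcal{L}_{X^{*}_{t}},a)=\Phi(t,\omega)\}$. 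Measurability of $\Gamma$ follows from the joint continuity of $b,f$ in $a$, the progressive measurability of $X^{*}$ and of $t\mapsto\mathcal{L}_{X^{*}_{t}}$, and the progressive measurability of $t\mapsto q^{*}_{t}$ (which is part of Definition~\ref{def of relaxed control}); its values are nonempty by step one and closed by continuity, so Kuratowski--Ryll-Nardzewski furnishes the required selection.

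The third step closes the argument. Plugging $u^{*}$ into the dynamics \eqref{eq1.1} we see that $(X^{*},K^{*})$ satisfies
\begin{equation*}
X^{*}_{t}=x+\int_{0}^{t} b(s,X^{*}_{s},\mathcal{L}_{X^{*}_{s}},u^{*}_{s})\,\mathrm{d}s+\int_{0}^{t}\sigma(s,X^{*}_{s},\mathcal{L}_{X^{*}_{s}})\,\mathrm{d}B_{s}+K^{*}_{t},
\end{equation*}
together with the nonnegativity and Skorokhod constraints, because the drift coincides $\mathrm{d}t\otimes\mathrm{d}\mathbb{P}$-a.e. with the relaxed drift driving $X^{*}$. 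Thus $\alpha^{*}:=(\Omega,\mathcal{F},\mathbb{F},\mathbb{P},B,u^{*},X^{*},K^{*})\in\mathcal{U}$ is admissible, and by the matching of the $f$-integrals together with the identical $c$-term and terminal cost we obtain $J(\alpha^{*})=J(r^{*})$. Since every strict control can be embedded as a relaxed control via $q_{t}=\delta_{u_{t}}$, we have $\inf_{\mathcal{U}}J\ge\inf_{\mathcal{R}}J=J(r^{*})=J(\alpha^{*})\ge\inf_{\mathcal{U}}J$, proving optimality.

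The main obstacle I expect is verifying the hypotheses of the measurable selection theorem cleanly: one has to ensure that the multifunction $\Gamma$ is progressively measurable, which requires knowing that $\Phi$ itself is progressively measurable. This in turn uses the fact that $(t,\omega)\mapsto q^{*}_{t}$ is progressively measurable in the sense built into Definition~\ref{def of relaxed control}, so that integrals of continuous bounded-on-compact functions against $q^{*}_{t}$ inherit progressive measurability; once this is verified, the Filippov selection is routine and the rest of the proof is essentially bookkeeping.
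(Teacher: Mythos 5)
Your proposal is correct and follows essentially the same route as the paper: the paper likewise takes the optimal relaxed control from Theorem~\ref{theo1}, notes via \textbf{(A.4)} that the $q$-averaged pair $(b,f)$ lies in the closed convex set $\mathcal{S}(t,\hat{X}_t,\mathcal{L}_{\hat{X}_t})$, invokes a measurable (Filippov-type) selection theorem — Theorem A.9 of Haussmann--Lepeltier rather than Kuratowski--Ryll-Nardzewski, but to the same effect — to obtain a progressively measurable strict control realizing the same drift and running cost, and concludes that the same pair $(\hat X,\hat K)$ solves the reflected equation with equal cost. Your added details (the barycenter argument and the final inequality chain between $\inf_{\mathcal{U}}J$ and $\inf_{\mathcal{R}}J$) are consistent with, and slightly more explicit than, the paper's write-up.
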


\begin{remark}
When Roxin’s condition is not satisfied, a strict optimal control may fail to exist. This is illustrated by the counterexample provided in Example~\ref{Example3}.
\end{remark}

\begin{example}[see page~86 in \cite{Kushner-exp}]\label{Example3}
Consider the following classical example.  
Let the state space be the whole real line $\mathbb{R}$, and the control set $A := [-1, 1]$.  
We work in the deterministic case with dynamics and cost functional given by
\begin{equation*}
\dot{X}(t) = u(t), \quad X(0) = 0,
\end{equation*}
and
\begin{equation*}
J(u) := \int_{0}^{T} \left[ X^2(t) + (u^2(t) - 1)^2 \right] \, \mathrm{d}t.
\end{equation*}
Note that, for each fixed $x$, the set
\begin{equation*}
\{ (a, \, x^2 + (a^2 - 1)^2) : a \in [-1, 1] \}
\end{equation*}
is not convex.  
It is easy to see that the infimum of $J(u)$ equals zero, but this infimum is not attained by any strict (ordinary) control.

Now define a sequence of relaxed controls by
\begin{equation*}
q^n(\mathrm{d}a, \mathrm{d}t) := \delta_{u^n(t)}(\mathrm{d}a) \, \mathrm{d}t,
\end{equation*}
where
\begin{equation*}
u^n(t) := (-1)^k \quad \text{if} \quad \frac{k}{n} \le t < \frac{k+1}{n}, \quad 0 \le k \le n-1.
\end{equation*}
Then the sequence $\{q^n\}$ converges weakly to
\begin{equation*}
\frac{1}{2} \left( \delta_{-1} + \delta_{1} \right)(\mathrm{d}a) \, \mathrm{d}t,
\end{equation*}
which is an optimal relaxed control.
\end{example}
As shown in the preceding example, a strict optimal control may fail to exist. However, it is possible to construct a sequence of strict (ordinary) controls that approximates the relaxed optimal control. We will then extend this result to our general control problem.

\begin{theorem}\label{approximatio1}
In general, under assumptions $\textbf{(A.1)--(A.3)}$, there exists a sequence of strict controls $(\alpha^n)_{n \ge 1}$ such that
\begin{equation*}
\inf_{r \in \mathcal{R}} J(r) = \lim_{n \to \infty} J(\alpha^n).
\end{equation*}
As a consequence,
\begin{equation*}
\inf_{r \in \mathcal{R}} J(r) = \inf_{ \alpha \in \mathcal{U} }J(\alpha).
\end{equation*}
\end{theorem}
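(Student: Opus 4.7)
The plan is to start from an optimal relaxed control $\hat{r} = (\Omega,\mathcal{F},\mathbb{F},\mathbb{P},B,\hat{q},\hat{X},\hat{K})$, whose existence is granted by Theorem~\ref{theo1}, and to construct a sequence of strict controls whose costs converge to $J(\hat{r})$. The second (infimum) identity is an easy consequence: every strict control is, via $u_t\mapsto\delta_{u_t}(\mathrm{d}a)\mathrm{d}t$, canonically embedded in $\mathcal R$, hence $\inf_{r\in\mathcal R}J(r)\le\inf_{\alpha\in\mathcal U}J(\alpha)$, while the approximation provides the reverse inequality.

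To build the approximating sequence I would invoke the classical chattering lemma (see, e.g., \cite{elkaroui1, Amin}): there exists a sequence $(u^n)$ of $A$-valued, $\mathbb{F}$-progressively measurable processes such that, $\mathbb{P}$-almost surely, the random measures $q^n_t(\mathrm{d}a)\mathrm{d}t:=\delta_{u^n_t}(\mathrm{d}a)\mathrm{d}t$ converge stably on $[0,T]\times A$ to $\hat{q}_t(\mathrm{d}a)\mathrm{d}t$. On the same filtered probability space I then consider the reflected McKean--Vlasov SDE driven by the \emph{same} Brownian motion $B$ with drift $b(t,x,\mu,u^n_t)$; assumptions $\textbf{(A.1)}$--$\textbf{(A.2)}$ and \cite{Adams}, Theorem~3.5, provide a unique strong solution $(X^n,K^n)$, yielding a strict control $\alpha^n=(\Omega,\mathcal F,\mathbb{F},\mathbb{P},B,u^n,X^n,K^n)\in\mathcal U$.

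The heart of the proof is the convergence $(X^n,K^n)\to(\hat{X},\hat{K})$ in a strong enough sense (typically in probability, uniformly in $t\in[0,T]$). Following the scheme already used for Theorem~\ref{theo1}, I would first establish uniform $L^2$ estimates $\sup_n\mathbb E[\sup_{t\le T}|X^n_t|^2+|K^n_T|^2]<\infty$, obtained from BDG, Gronwall and the linear-growth bound in $\textbf{(A.1)}$. These estimates provide tightness of the laws of $(X^n,K^n,q^n,B)$ on $\mathcal C([0,T],\mathbb{R}^+)^2\times\mathcal V\times\mathcal C([0,T],\mathbb{R})$. By Skorokhod's representation theorem one extracts, along a subsequence, an almost sure limit $(\tilde X,\tilde K,\tilde q,\tilde B)$. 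The stable convergence of $q^n$ to $\hat{q}$, combined with the Lipschitz continuity in $\textbf{(A.2)}$ (which ensures that $\mathcal L_{X^n_t}\to\mathcal L_{\tilde X_t}$ in $W_2$), allows one to pass to the limit in each term of the relaxed SDE~\eqref{def of Relaxed} and in the Skorokhod reflection condition (the map $(x,y)\mapsto\int_0^T x_t\,\mathrm{d}y_t$ is continuous on paths with bounded variation). The limit $(\tilde X,\tilde K)$ therefore solves the relaxed RMVSDE associated with $\hat{q}$; uniqueness of this solution, which also follows from $\textbf{(A.1)}$--$\textbf{(A.2)}$, identifies it with $(\hat X,\hat K)$ in law, and consequently the whole sequence converges.

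Finally, I would pass to the limit in the cost functional. Writing
\begin{equation*}
J(\alpha^n)=\mathbb{E}\!\left[\int_0^T\!\!\int_A f(s,X^n_s,\mathcal L_{X^n_s},a)\,q^n_s(\mathrm{d}a)\mathrm{d}s+\int_0^T c(s,X^n_s,\mathcal L_{X^n_s})\mathrm{d}K^n_s+g(X^n_T,\mathcal L_{X^n_T})\right],
\end{equation*}
the continuity of $f,c,g$ from $\textbf{(A.3)}$, the stable convergence of $q^n$ to $\hat{q}$, and the path convergence of $(X^n,K^n)$ give the convergence of the integrand almost surely; the quadratic growth in $\textbf{(A.3)}$ together with the uniform $L^2$ estimates provides uniform integrability, so Vitali's theorem yields $J(\alpha^n)\to J(\hat r)=\inf_{r\in\mathcal R}J(r)$. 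The main obstacle in this program is the Skorokhod step: because of the McKean--Vlasov dependence and the reflection, one must simultaneously identify the marginal laws of the limit and pass through the non-standard map $(x,y)\mapsto\int_0^T x_t\,\mathrm{d}y_t$; both are handled via the Lipschitz bound $W_2(\mathcal L_{X^n_t},\mathcal L_{\hat X_t})^2\le\mathbb E|X^n_t-\hat X_t|^2$ and the continuity of the Skorokhod map on $\mathcal C([0,T],\mathbb R)$.
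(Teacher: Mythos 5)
Your construction of the approximating sequence is exactly the paper's: the chattering lemma gives $\mathbb{F}$-adapted $A$-valued processes $u^n$ with $\delta_{u^n_t}(\mathrm{d}a)\mathrm{d}t\to\hat q_t(\mathrm{d}a)\mathrm{d}t$ a.s., you solve the reflected McKean--Vlasov SDE on the \emph{same} space with the \emph{same} Brownian motion, and the embedding of strict controls into $\mathcal R$ gives one inequality while the approximation gives the other. Where you diverge is the key convergence step. The paper (Lemma~\ref{adjlemma3}) proves \emph{strong} convergence, $\mathbb{E}\bigl[\sup_{t\le T}|X^n_t-X_t|^2+\sup_{t\le T}|K^n_t-K_t|^2\bigr]\to 0$, by a direct stability estimate: Itô's formula applied to $|X^n_t-X_t|^2$, the Skorokhod conditions killing the reflection terms via $(X^n_t-X_t)\,\mathrm{d}K^n_t\le 0$ and $(X^n_t-X_t)\,\mathrm{d}K_t\ge 0$, BDG and Gronwall, and finally a truncation-plus-dominated-convergence argument that exploits the a.s. convergence of $\delta_{u^n_t}(\mathrm{d}a)\mathrm{d}t$ to handle the drift discrepancy $I^n$. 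You instead re-run the compactification machinery of Theorem~\ref{theo1} (uniform moments, tightness, Skorokhod representation, identification of the limit by uniqueness). Since everything already lives on one probability space and the chattering convergence is already almost sure there, this detour through weak convergence is unnecessary and strictly harder: after representation you must check that the limiting $\tilde B$ is still a Brownian motion for the relevant filtration, pass to the limit in the stochastic integral, and identify the joint law of $(\tilde q,\tilde B)$ with that of $(\hat q,B)$ before a Yamada--Watanabe-type uniqueness argument lets you conclude convergence in law of the whole sequence; none of these issues arise in the paper's pathwise estimate.

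Two concrete points in your version need repair. First, the uniform bounds you announce are only second-moment bounds, but you then use $W_2(\mathcal L_{X^n_t},\mathcal L_{\tilde X_t})^2\le\mathbb{E}|X^n_t-\tilde X_t|^2$ to pass to the limit in the measure argument; almost sure convergence plus an $L^2$ bound does not make $\mathbb{E}|X^n_t-\tilde X_t|^2$ vanish --- you need uniform integrability of the squares, i.e.\ moments of order strictly greater than $2$ (the paper proves fourth-moment estimates in Lemma~\ref{lemma1} precisely for this reason). The same moment upgrade is what justifies your appeal to Vitali for the cost functional, since $f$, $c$, $g$ have quadratic growth. Second, ``uniqueness of this solution \ldots identifies it with $(\hat X,\hat K)$ in law'' is stated too quickly: what assumptions \textbf{(A.1)}--\textbf{(A.2)} give directly is pathwise uniqueness for the reflected equation driven by a \emph{given} pair $(q,B)$, and turning that into the identification of the limit law (and then whole-sequence convergence) requires the transfer argument sketched above. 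Both gaps are fixable, but the paper's route --- which you could have followed verbatim once the $u^n$ are on the same space --- avoids them entirely and delivers the stronger $L^2$ conclusion needed to pass to the limit in $\int_0^T c(s,X^n_s,\mathcal L_{X^n_s})\,\mathrm{d}K^n_s$ without any law identification.
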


\section{Proof of the main results}\label{sec3}
To prove the theorem \ref{theo1}, we need to some auxiliary results concerning the tightness of process.\\
Let  
\begin{equation*}
r^n = (\Omega^n, \mathcal{F}^n, \mathbb{F}^n, \mathbb{P}^n, B^n, q^n, X^n, K^n)
\end{equation*} 
be a minimizing sequence such that  
\begin{equation*}
\lim_{n \to \infty} J(r^n) = \inf_{r \in \mathcal{R}} J(r).
\end{equation*}
For each $n$, the pair $(X^n, K^n)$ denotes the solution of the reflected McKean-Vlasov SDE
\begin{equation} \label{eq2}
\begin{cases}
\displaystyle
\mathrm{d}X^n_t = \int_A b(t, X^n_t, \mathcal{L}_{X^n_t}, a)\, q^n_t(\mathrm{d}a)\, \mathrm{d}t
+ \sigma(t, X^n_t, \mathcal{L}_{X^n_t})\, \mathrm{d}B^n_t
+ \mathrm{d}K^n_t, & t \in [0, T], \\[1em]
X^n_t \ge 0, \quad \text{a.s.}, \\[0.5em]
\displaystyle \int_0^T X^n_t\, \mathrm{d}K^n_t = 0, \quad \text{a.s.}
\end{cases}
\end{equation}

In what follows, the constant $ C $ may vary from line to line. We denote by $\sigma^* $ the transpose of $ \sigma $.
\begin{lemma}\label{lemma1}
There exists a positive constant C such that
\begin{equation} \label{eq4.2}
\sup_{n} \, \mathbb{E}^{\mathbb{P}^n}\!\left[ 
   \sup_{0 \leq t \leq T} |X^n_t|^{4} 
   + \sup_{0 \leq t \leq T} |K^n_t|^{4} 
\right] \leq C.
\end{equation}
Moreover, the processes \((X^n, K^n, B^n)\) are tight in the space 
\begin{equation*}
\mathcal{C}([0,T], \mathbb{R}^+) \times \mathcal{C}([0,T], \mathbb{R}^+) \times \mathcal{C}([0,T], \mathbb{R}^m).
\end{equation*}
\end{lemma}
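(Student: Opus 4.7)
I would reduce both claims to estimates on the free (unreflected) semimartingale by using the one-dimensional Skorokhod reflection at $0$. Setting
\begin{equation*}
Y^n_t := x + \int_0^t \int_A b(s, X^n_s, \mathcal{L}_{X^n_s}, a)\, q^n_s(\mathrm{d}a)\, \mathrm{d}s + \int_0^t \sigma(s, X^n_s, \mathcal{L}_{X^n_s})\, \mathrm{d}B^n_s,
\end{equation*}
uniqueness of the Skorokhod problem for reflection at $0$ (with $x \ge 0$) yields the explicit formula
\begin{equation*}
K^n_t = \max\!\Bigl(0,\, -\inf_{0 \le s \le t} Y^n_s\Bigr), \qquad X^n_t = Y^n_t + K^n_t,
\end{equation*}
from which follow the pathwise bounds $\sup_{s \le t} K^n_s \le \sup_{s \le t} |Y^n_s|$ and $\sup_{s \le t} X^n_s \le 2 \sup_{s \le t} |Y^n_s|$. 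Moreover, the map $y \mapsto (x, k)$ is $2$-Lipschitz in the uniform norm, a fact I will reuse at the tightness step.

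\textbf{Moment bound.} By the pathwise inequalities above, it suffices to control $\mathbb{E}[\sup_{s \le t}|Y^n_s|^4]$. Applying Burkholder--Davis--Gundy to the martingale part, H\"older to the drift, and the linear-growth bound in $(\mathbf{A.1})$ (together with $\int |y|^2 \mathcal{L}_{X^n_s}(\mathrm{d}y) = \mathbb{E}|X^n_s|^2$) yields
\begin{equation*}
\mathbb{E}\Bigl[\sup_{s \le t} |Y^n_s|^4\Bigr] \le C + C \int_0^t \bigl(1 + \mathbb{E}|X^n_s|^4\bigr)\, \mathrm{d}s,
\end{equation*}
where I used Jensen to absorb $(\mathbb{E}|X^n_s|^2)^2$ into $\mathbb{E}|X^n_s|^4$. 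Feeding back $\mathbb{E}|X^n_s|^4 \le 16\,\mathbb{E}[\sup_{u \le s}|Y^n_u|^4]$ closes the estimate and Gr\"onwall provides a bound uniform in $n$; the corresponding bound on $K^n$ follows from the pathwise inequality.

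\textbf{Tightness.} For the increments of $Y^n$ on $[s, t]$, the same tools produce
\begin{equation*}
\mathbb{E}|Y^n_t - Y^n_s|^4 \le C|t-s|^3 \int_s^t \mathbb{E}|b|^4\, \mathrm{d}u + C\,\mathbb{E}\Bigl(\int_s^t |\sigma|^2\, \mathrm{d}u\Bigr)^2 \le C|t-s|^2,
\end{equation*}
where $C$ depends only on $T$ and the uniform $L^4$ bound from the previous step. Kolmogorov's criterion then gives tightness of $(Y^n)$ in $\mathcal{C}([0,T], \mathbb{R})$, and the continuous mapping theorem applied through the Lipschitz Skorokhod map transfers tightness to $(X^n, K^n)$. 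Since each $B^n$ has Wiener-measure law, $(B^n)$ is trivially tight, and joint tightness of $(X^n, K^n, B^n)$ follows from tightness of each marginal.

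\textbf{Expected difficulty.} The only delicate point is that $K^n$ is of bounded variation rather than an It\^o process, so neither BDG nor the Kolmogorov criterion applies to it directly. The Skorokhod-map representation circumvents this in a single stroke; an alternative would be to apply It\^o's formula to $(X^n_t)^4$ and use the Skorokhod condition $\int_0^T X^n\, \mathrm{d}K^n = 0$ to delete the reflection term from the computation before proceeding via Gr\"onwall as above.
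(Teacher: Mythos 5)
Your proposal is correct, but it follows a genuinely different route from the paper. You reduce everything to the free semimartingale $Y^n$ via the explicit one-dimensional Skorokhod map, $K^n_t = \bigl(-\inf_{s\le t} Y^n_s\bigr)^+$, $X^n = Y^n + K^n$, and then (a) get the fourth-moment bound by BDG, the linear growth in \textbf{(A.1)}, the feedback inequality $\mathbb{E}|X^n_s|^4 \le 16\,\mathbb{E}\bigl[\sup_{u\le s}|Y^n_u|^4\bigr]$ and Gr\"onwall, and (b) get tightness from the Kolmogorov criterion applied to $Y^n$ (fourth-moment increments of order $|t-s|^2$) transferred to $(X^n,K^n)$ through the Lipschitz continuity of the Skorokhod map. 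The paper never invokes the Skorokhod map: it applies It\^o's formula to $|X^n_t|^2$ and kills the reflection term using the condition $\int_0^T X^n_t\,\mathrm{d}K^n_t=0$, bounds $K^n$ from the equation itself, and proves tightness by comparing $X^n$ with an auxiliary solution $(X^{n,s},K^{n,s})$ restarted at time $s$, using the sign information $(X^n-X^{n,s})\,\mathrm{d}K^n\le 0$ and a Billingsley/Aldous-type criterion. Your argument is shorter and handles the reflection purely pathwise, and your exponent-$4$ increment estimate plugs directly into the standard Kolmogorov tightness criterion, whereas the paper's increment bound is only a second-moment estimate of order $|u-s|$; on the other hand, the paper's It\^o-formula route does not rely on the explicit reflection map and is the one that would survive in higher dimensions or for more general reflecting domains, where no Lipschitz Skorokhod map is available. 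Two small points to make explicit in your write-up: identifying $(X^n,K^n)$ as the Skorokhod image of $Y^n$ uses that $K^n$ is continuous, nondecreasing, starts at $0$ and increases only on $\{X^n=0\}$ (implicit in the paper's definition via $\int_0^T X^n\,\mathrm{d}K^n=0$), and the Gr\"onwall step should be justified by a localization (stopping-time) argument to ensure the quantity being iterated is finite — an omission the paper shares.
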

\begin{proof} 
To establish the tightness of the sequence $(X^n, K^n, B^n)$, we apply Aldous’ tightness criterion stated in Lemma~\ref{Tights}. The proof of this lemma is carried out in two steps.

\textbf{Step 1:} In this step, we establish the inequality \eqref{eq4.2}.

By using Itô’s formula, we get
\begin{align*}
|X^n_t|^2 &= x^2 + 2 \int_0^t \int_A^{} b(s,X^n_s, \mathcal{L}_{X^n_s},a)X^n_s q^n_s(\mathrm{d}a)\mathrm{d}s +2 \int_0^t X^n_s \sigma(s,X^n_s,\mathcal{L}_{X^n_s})\mathrm{d}B^n_s \\
&\quad  \quad+\int_0^t \sigma \sigma^*(s,X^n_s, \mathcal{L}_{X^n_s})\mathrm{d}s
+\int_0^t X^n_s \mathrm{d}K^n_s \\
&= x^2 + 2 \int_0^t \int_A^{} b(s,X^n_s, \mathcal{L}_{X^n_s},a)X^n_s q^n_s(\mathrm{d}a)\mathrm{d}s +2 \int_0^t X^n_s \sigma(t,X^n_s, \mathcal{L}_{X^n_s})\mathrm{d}B^n_s \\
&\quad \quad +\int_0^t \sigma \sigma^*(s,X^n_s, \mathcal{L}_{X^n_s})\mathrm{d}s,
\end{align*}
where the second equality is due to the Skorokhod’s condition.\\
We take the supremum over $[0,T]$ and the expectation , we obtain
\begin{align*}
\mathbb{E}^{\mathbb{P}^n}\!\left[\sup_{0 \leq t \leq T} |X^n_t|^{4}\right] 
&\leq C\Bigg( x^{4} 
+ \mathbb{E}^{\mathbb{P}^n}\!\left[\sup_{0 \leq t \leq T}\int_0^t \int_A |b(s,X^n_s,\mathcal{L}_{X^n_s},a)X^n_s|^2\,q^n_s(\mathrm{d}a)\, \mathrm{d}s \right] \\
&\quad \quad \quad  + \mathbb{E}^{\mathbb{P}^n}\!\left[\sup_{0 \leq t \leq T}\left|\int_0^t X^n_s \sigma(s,X^n_s, \mathcal{L}_{X^n_s})\,\mathrm{d}B^n_s\right|^2\right] \\
&\quad \quad \quad + \mathbb{E}^{\mathbb{P}^n}\!\left[\sup_{0 \leq t \leq T}\int_0^t |\sigma \sigma^*(s,X^n_s, \mathcal{L}_{X^n_s})|^2\,\mathrm{d}s\right] \Bigg) \\
&\leq C\Bigg(1+x^{4} 
+ \mathbb{E}^{\mathbb{P}^n}\!\left[\int_0^T |X^n_t|^{4}\,\mathrm{d}t \right] \\
&\quad \quad \quad + \mathbb{E}^{\mathbb{P}^n}\!\left|\int_0^T |X^n_s|^2 \sigma\sigma^*(s,X^n_s, \mathcal{L}_{X^n_s})\,\mathrm{d}s\right|\Bigg) \\
&\leq C\Bigg(1+x^{4} 
+ \mathbb{E}^{\mathbb{P}^n}\!\left[\int_0^T |X^n_t|^{4}\,\mathrm{d}t \right] \Bigg).
\end{align*}
The second inequality follows from the Burkholder–Davis–Gundy inequality and \textbf{(A.2)}; the third follows from \textbf{(A.2)}.\\ 
Then by Gronwall’s lemma, there exists a constant C such that 
\begin{equation}\label{estX}
\mathbb{E}^{\mathbb{P}^n}\left[\sup_{0 \leq t \leq T} |X^n_t|^{4}\right] \leq C.
\end{equation}
On the other hand, we have 
\begin{equation*}
K^n_t = X^n_t -x-\int_0^t \int_A b(s, X^n_s,\mathcal{L}_{X^n_s}, a) q^n_s(\mathrm{d}a)\mathrm{d}s - \int_0^t \sigma(s, X^n_s, \mathcal{L}_{X^n_s}) \mathrm{d}B^n_s.
\end{equation*}
Then, by \eqref{estX}, assumption \textbf{(A.2)}, and the Burkholder--Davis--Gundy inequality, there exists a constant $C > 0$ such that 
\begin{equation*}
\mathbb{E}^{\mathbb{P}^n}\!\left[\sup_{0 \leq t \leq T} |K^n_t|^{4}\right] \leq C.
\end{equation*}

\textbf{Step 2:}
For $0 \leq s \leq u \leq T$, we consider the solution $(X^{n,s}, K^{n,s})$ of the following reflected McKean-Vlasov SDE 
\begin{equation}
\begin{cases}
X^{n,s}_t =X^n_s + \displaystyle\int_s^t \int_A b(r, X^{n,s}_r,\mathcal{L}_{X^{n,s}_r} ,a) q^n_r(\mathrm{d}a)\mathrm{d}r + \displaystyle\int_s^t \sigma(r, X^{n,s}_r, \mathcal{L}_{X^{n,s}_r}) \mathrm{d}B^n_r + K^{n,s}_t, & t \in [s, T], \\
X^{n,s}_t \geq 0 \quad \text{a.s}, \\
\displaystyle\int_{s}^{T} X^{n,s}_t \mathrm{d}K^{n,s}_t = 0 \quad \text{a.s.}
\end{cases}
\end{equation}
By proceeding as in the proof of Lemma \ref{lemma1}, we can easily get the following estimation
\begin{equation} \label{esti11}
\mathbb{E}^{\mathbb{P}^n}\left[\sup_{s \leq t \leq T} |X^{n,s}_t|^4 \right] \leq C,
\end{equation}
where the constant $C$ is independent of $n$.\\
Now, applying Itô’s formula to the continuous semimartingale $|X^n_t - X^{n,s}_t|^2$ yields to: 
\begin{align*}
\mathrm{d}|X^n_t - X^{n,s}_t|^2 &= 2 \int_A^{} (X^n_t - X^{n,s}_t)\left( b(t,X^n_t, \mathcal{L}_{X^{n}_t},a)-b(t,X^{n,s}_t,\mathcal{L}_{X^{n,s}_t},a)\right) q^n_t(\mathrm{d}a)\mathrm{d}t \\ & +2 (X^n_t - X^{n,s}_t)\left(\sigma(t,X^n_t,\mathcal{L}_{X^{n}_t})-\sigma(t,X^{n,s}_t, \mathcal{L}_{X^{n,s}_t})\right)\mathrm{d}B^n_t \\ & +\left(\sigma(t,X^n_t, \mathcal{L}_{X^{n}_t})-\sigma(t,X^{n,s}_t, \mathcal{L}_{X^{n,s}_t})\right) \left(\sigma(t,X^n_t, \mathcal{L}_{X^{n}_t})-\sigma(t,X^{n,s}_t, \mathcal{L}_{X^{n,s}_t })\right)^*\mathrm{d}t \\ 
& + 2 (X^n_t - X^{n,s}_t)\mathrm{d}K^n_t -2 (X^n_t - X^{n,s}_t)\mathrm{d}K^{n,s}_t.
\end{align*} 
The Skorokhod conditions implies that $(X^n_t - X^{n,s}_t)\mathrm{d}K^n_t \leq 0$ and\\
$(X^n_t - X^{n,s}_t)\mathrm{d}K^{n,s}_t \geq 0$, then we get,
\begin{align*}
\mathrm{d}|X^n_t - X^{n,s}_t|^2 &\leq 2 \int_A^{} (X^n_t - X^{n,s}_t)\left(b(t,X^n_t, \mathcal{L}_{X^{n}_t},a)-b(t,X^{n,s}_t,\mathcal{L}_{X^{n,s}_t},a)\right)q^n_t(\mathrm{d}a)\mathrm{d}t \\ & +2 (X^n_t - X^{n,s}_t)\left(\sigma(t,X^n_t, \mathcal{L}_{X^{n}_t})-\sigma(t,X^{n,s}_t, \mathcal{L}_{X^{n,s}_t})\right)\mathrm{d}B^n_t \\ & +\left(\sigma(t,X^n_t, \mathcal{L}_{X^{n}_t})-\sigma(t,X^{n,s}_t, \mathcal{L}_{X^{n,s}_t})\right)\left(\sigma(t,X^n_t, \mathcal{L}_{X^{n}_t})-\sigma(t,X^{n,s}_t, \mathcal{L}_{X^{n,s}_t})\right)^*\mathrm{d}t.
\end{align*}
By applying the Burkholder–Davis–Gundy inequality, the Cauchy-Schwarz inequality, and leveraging assumption \textbf{(A.2)}, along with Lemma \eqref{eq4.2} and \eqref{esti11}, we have that the local martingale
\begin{equation*}
\int_0^{.}(X^n_t - X^{n,s}_t)\left(\sigma(t,X^n_t, \mathcal{L}_{X^{n}_t})-\sigma(t,X^{n,s}_t, \mathcal{L}_{X^{n,s}_t})\right)\mathrm{d}B^n_t
\end{equation*}
is uniformly integrable martingale.\\
If we take the integral with respect to s and u, and then take the expectation, we obtain 
\begin{align*}
\mathbb{E}^{\mathbb{P}^n}\left[ |X^n_u - X^{n,s}_u|^2 \right ] & \leq 2\mathbb{E}^{\mathbb{P}^n}\left[\int_s^u \int_A^{} (X^n_t - X^{n,s}_t)\left(b(t,X^n_t, \mathcal{L}_{X^{n}_t},a)-b(t,X^{n,s}_t,\mathcal{L}_{X^{n,s}_t},a)\right)q^n_t(\mathrm{d}a)\mathrm{d}t\right ]\\
&+\mathbb{E}^{\mathbb{P}^n}\left[\int_s^u \left(\sigma(t,X^n_t, \mathcal{L}_{X^{n}_t})-\sigma(t,X^{n,s}_t,\mathcal{L}_{X^{n,s}_t})\right)\left(\sigma(t,X^n_t,\mathcal{L}_{X^{n}_t})-\sigma(t,X^{n,s}_t,\mathcal{L}_{X^{n,s}_t})\right)^*\mathrm{d}t \right].
\end{align*}
By assumption \textbf{(A.2)}, there exists a constant $C > 0$, independent of s and u,
such that 
\begin{equation}
\mathbb{E}^{\mathbb{P}^n}\left[|X^n_u - X^{n,s}_u|^2 \right] \leq C\mathbb{E}^{\mathbb{P}^n}\left[\int_s^u|X^n_t - X^{n,s}_t|^2 \mathrm{d}t\right].
\end{equation}
Then
\begin{align*}
\mathbb{E}^{\mathbb{P}^n}\left[|X^n_u - X^{n,s}_u|^2\right] &\leq C(u-s)\mathbb{E}^{\mathbb{P}^n}\left[\sup_{s \leq t \leq T}|X^n_t - X^{n,s}_t|^2\right]\\
&\leq 2C(u-s)\left(\mathbb{E}^{\mathbb{P}^n}\left[\sup_{s \leq t \leq T}|X^n_t|^2\right]+\mathbb{E}^{\mathbb{P}^n}\left[\sup_{s \leq t \leq T}|X^{n,s}_t|^2\right] \right).
\end{align*}
By the uniqueness of the solution to equation (\ref{eq2}), it follows that $X^{n,s} = X^n_{. \wedge s}$ on $[s,T]$, which implies that
\begin{equation}
\mathbb{E}^{\mathbb{P}^n}\left[|X^n_u - X^n_s|^2\right] \leq 4C(u-s)\mathbb{E}^{\mathbb{P}^n}\left[\sup_{0 \leq t \leq T}|X^n_t|^2 \right]\leq 4C(u-s)\mathbb{E}^{\mathbb{P}^n}\left[\sup_{0 \leq t \leq T}|X^n_t|^4 \right]^{1/2}.
\end{equation}
And the tightness of $(X^n)_{n \geq0}$ follows from \eqref{eq4.2}.\\
Similarly, by using the fact that 
\begin{equation}
K^n_t = X^n_t -x-\int_0^t \int_A b(r, X^n_r,\mathcal{L}_{X^{n}_r}, a) q^n_r(\mathrm{d}a)\mathrm{d}r - \int_0^t \sigma(r, X^n_r, \mathcal{L}_{X^{n}_r}) \mathrm{d}B^n_r,
\end{equation}
we can prove the tightness of $(K^n)_{n \geq 0}$. The tightness of $(B^n)_{n \geq 0}$ can be shown using standard arguments.
\end{proof}
\begin{lemma} (Lemma 3.5 \cite{bahlali1}).\label{lemma3}
The family of relaxed controls $(q^n)_{n \geq 0}$ is tight in $\mathcal{V}$.
\end{lemma}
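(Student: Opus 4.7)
The plan is to invoke directly the compactness of $\mathcal{V}$ that was already recalled in the preliminary section. Since $A$ is assumed to be a compact metric space and $[0,T]$ is a bounded interval, the product $[0,T] \times A$ is a compact Polish space. The subset $\mathcal{V} \subset \mathcal{P}([0,T] \times A)$ consisting of those probability measures whose first marginal is Lebesgue measure on $[0,T]$ is closed, and by the result of Jacod and Mémin already cited, it is compact and metrizable when endowed with the topology of stable convergence of measures.

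Given this ambient compactness, the tightness of $(q^n)_{n \geq 0}$ is immediate from Prokhorov's theorem. Concretely, for any $\varepsilon > 0$, I would take the compact set $K_\varepsilon := \mathcal{V}$ itself. Since $q^n \in \mathcal{V}$ almost surely by construction, one has
\begin{equation*}
\mathbb{P}^n\!\left(q^n \in K_\varepsilon\right) = 1 \geq 1 - \varepsilon,
\end{equation*}
uniformly in $n$, which is precisely the tightness criterion for the sequence of laws $(\mathcal{L}(q^n))_{n \geq 0}$ on $\mathcal{V}$.

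The only point requiring any care, and the closest thing to an obstacle, is to ensure that each $q^n$ is a genuine $\mathcal{V}$-valued random element, i.e.\ that $\omega \mapsto q^n(\omega)$ is Borel measurable with respect to the stable convergence topology on $\mathcal{V}$. This is guaranteed by the measurability condition $ii')$ in Definition~\ref{def of relaxed control}, which forces $\mathds{1}_{]0,t]}\,q^n$ to be $\mathcal{F}^n_t$-measurable for every $t$; in particular $q^n$ is $\mathcal{F}^n$-measurable. Once this is observed, the argument is complete and requires no further work, which explains why the statement can simply be imported from \cite{bahlali1}.
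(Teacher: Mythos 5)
Your argument is correct and is essentially the same as the one behind the cited Lemma 3.5 of \cite{bahlali1}: since $A$ is compact, $\mathcal{V}$ is compact and metrizable for the stable topology by Jacod--M\'emin, so the laws of the $\mathcal{V}$-valued random variables $q^n$ are trivially tight (take the whole space as the compact set), with measurability supplied by condition $ii')$ of Definition~\ref{def of relaxed control}. Nothing further is needed.
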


\subsection{Proof of theorem \ref{theo1}}
\begin{proof}
Lemmas \ref{lemma1} and \ref{lemma3} implies that the sequence of processes $\gamma ^n= (X^n, B^n, K^n, q^n)$ is tight on the space\\
\begin{equation}
\Gamma := \mathcal{C}([0,T], \mathbb{R^+}) \times \mathcal{C}([0,T], \mathbb{R}^m) \times \mathcal{C}([0,T], \mathbb{R^+}) \times \mathcal{V},
\end{equation}
equipped with the product topology of the uniform convergence, and the topology of stable convergences of measures.\\
By Skorokhod representation Lemma \ref{lemma-representation}, there exists a probability space $(\hat{\Omega},\hat{\mathcal{F}}, \hat{\mathbb{P}})$, a sequence $\hat{\gamma^n} = (\hat{X^n}, \hat{B^n}, \hat{K^n}, \hat{q^n})$ and $\hat{\gamma} = (\hat{X}, \hat{B}, \hat{K}, \hat{q})$ defined on this space such that :
\begin{description}
\item[1)] for each $n \in \mathbb{N},$
\begin{equation}
\mathcal{L}(\hat{\gamma^n}) = \mathcal{L}(\gamma ^n).
\end{equation}
\item[2)] there exists a sub-sequence $(\hat{\gamma^n_k})$ of $(\hat{\gamma^n})$, denoted again by $(\hat{\gamma^n})$, it converges almost surely $\hat{\mathbb{P}}-a.s.$ to $(\hat{\gamma})$ in the space $\Gamma$.
\item[3)] $\sup_{ 0 \leq t \leq T}|\hat{X^n}_t-\hat{X}_t| \to 0$ and $\sup_{ 0 \leq t \leq T}|\hat{K^n}_t-\hat{K}_t| \to 0$ $\hat{\mathbb{P}}-a.s.$
\end{description}
By property \textbf{1)}, we drive 
\begin{equation} \label{eq5}
\begin{cases}
\mathrm{d}\hat{X^n}_t =\displaystyle\int_A b(t, \hat{X^n}_t, \mathcal{L}_{\hat{X^n}_t}, a) \hat{q^n}_t(\mathrm{d}a)\mathrm{d}t +  \sigma(t,\hat{ X^n}_t, \mathcal{L}_{\hat{X^n}_t}) \mathrm{d}\hat{B^n}_t + \mathrm{d}\hat{K^n}_t, & t \in [0, T], \\
\hat{X^n}_t \geq 0 \quad \text{a.s}, \\
\displaystyle\int_0^T \hat{X^n}_t \mathrm{d}\hat{K^n}_t = 0 \quad \hat{\mathbb{P}}-\text{a.s.} 
\end{cases}
\end{equation}

To proceed with the passage to the limit, we begin by proving the following limits

\begin{equation}
\int_{0}^{t}\int_{\mathbb{A}}b(s,\hat{X}^n_s,\mathcal{L}_{\hat{X^n}_t}, a)\hat{q}_s^n(\mathrm{d}a)\mathrm{d}s \to  \int_{0}^{t}\int_{\mathbb{A}}b(s,\hat{X}_s,\mathcal{L}_{\hat{X}_t} ,a)\hat{q}_s(\mathrm{d}a)\mathrm{d}s \label{limits_1}
\end{equation}

\begin{equation*}
\int_{0}^{T}\sigma(s,\hat{X}^n_s,\mathcal{L}_{\hat{X^n}_s} )\mathrm{d}\hat{B}_s^n \to \int_{0}^{T}\sigma(s,\hat{X}_s, \mathcal{L}_{\hat{X}_s})\mathrm{d}\hat{B}_s
\end{equation*}
We have
\begin{equation}
\begin{split}
\int_{0}^{T}\int_{\mathbb{A}}b(s,\hat{X}^n_s,\mathcal{L}_{\hat{X^n}_s}, a)\hat{q}_s^n(\mathrm{d}a)\mathrm{d}s&-  \int_{0}^{T}\int_{\mathbb{A}}b(s,\hat{X}_s,\mathcal{L}_{\hat{X}_s}, a)\hat{q}_s(\mathrm{d}a)\mathrm{d}s\\
&=\int_{0}^{T}\int_{\mathbb{A}}b(s,\hat{X}^n_s,\mathcal{L}_{\hat{X^n}_s}, a)-b(s,\hat{X}_s,\mathcal{L}_{\hat{X}_t}, a)\hat{q}_s^n(\mathrm{d}a)\mathrm{d}s \\
&+\int_{0}^{T}\int_{\mathbb{A}}b(s,\hat{X}_s,\mathcal{L}_{\hat{X}_t}, a)(\hat{q}_s^n-\hat{q}_s)(\mathrm{d}a)\mathrm{d}s\label{twotherm}
\end{split}
\end{equation}

From the Lipschitz assumptions on $b$, we get 
\begin{equation}\label{thefirst}
\hat{\mathbb{E}}\!\left[\int_{0}^{T}\!\int_{\mathbb{A}}
\big|b(s,\hat{X}^n_s,\mathcal{L}_{\hat{X^n}_t}, a)
- b(s,\hat{X}_s,\mathcal{L}_{\hat{X}_t}, a)\big|^2
\,\hat{q}_s^n(\mathrm{d}a)\,\mathrm{d}s\right]
\leq C \hat{\mathbb{E}} \left[ \int_0^T |\hat{X}_s^n-\hat{X}_s|^2\mathrm{d}s \right]
\end{equation}

Using properties \textbf{1)–3)}, together with \eqref{eq4.2} and the dominated convergence theorem, we obtain
\[
\lim_{n \to \infty} \hat{\mathbb{E}}\left[ \int_0^T \big|\hat{X}^n_t - \hat{X}_t\big|^2 \, \mathrm{d}s \right] = 0.
\]
Combining this result with \eqref{thefirst}, we conclude that the first term on the right-hand side of \eqref{twotherm} converges to zero in probability.

It remains to prove the convergence of the second term in \eqref{twotherm}. Let $M>0$, we have
\begin{equation}\label{twotherm1}
\begin{split}
\left|\int_{0}^{t}\int_{\mathbb{A}}b(s,\hat{X}_s,\mathcal{L}_{\hat{X}_s}, a)(\hat{q}_s^n-\hat{q}_s)(\mathrm{d}a)\mathrm{d}s\right|&\leq \left|\int_{0}^{t}\int_{\mathbb{A}}b(s,\hat{X}_s,\mathcal{L}_{\hat{X}_s},a)\mathds{1}_{\{|\hat{X}_{s}|< M\}}(\hat{q}_s^n-\hat{q}_s)(\mathrm{d}a)\mathrm{d}s\right|\\
&+\left|\int_{0}^{t}\int_{\mathbb{A}}b(s,\hat{X}_s,\mathcal{L}_{\hat{X}_s},a)\mathds{1}_{\{|\hat{X}_{s}| \geq M\}}(\hat{q}_s^n-\hat{q}_s)(\mathrm{d}a)\mathrm{d}s\right|.
\end{split}
\end{equation}
We have that the mapping $(s,a) \mapsto b(s,\hat{X}_s,\mathcal{L}_{\hat{X}_s},a)\mathds{1}_{\{|\hat{X}_{s}|<M\}}$ is bounded and measurable, and the function $a \mapsto b(s,\hat{X}_s,\mathcal{L}_{\hat{X}_s},a)\mathds{1}_{\{|\hat{X}_{s}|<M\}}$ is continuous. From \textbf{2)} , we conclude that the first term on the right-hand side of \eqref{twotherm1} converges to zero. Then, by the Lebesgue dominated convergence theorem, we obtain convergence in $L_1(\hat{\mathbb{P}})$. 

On the other hand, from the linear growth assumption on $b$, we obtain 
\begin{align*}
&\hat{\mathbb{E}}\left(\left|\int_{0}^{T}\int_{\mathbb{A}}b(s,\hat{X}_s,\mathcal{L}_{\hat{X}_s},a)\mathds{1}_{\{|\hat{X}_{s}|\geq M\}}(\hat{q}_s^n-\hat{q}_s)(\mathrm{d}a)\mathrm{d}s\right| \right) \\
&\leq\gamma\hat{\mathbb{E}}\left(\int_{0}^{T} \left( 1+|\hat{X}_{s}|\right) \mathds{1}_{\{|\hat{X}_{s}|\geq M\}} \mathrm{d}s \right)\\
&\leq \frac{1}{M} \int_{0}^{T} \hat{\mathbb{E}}\left(|\hat{X}_{s}|^2+ 1 \right) \mathrm{d}s 
\end{align*}
where, in the last inequality we used the formula $x\mathds{1}_{\{ x>a\}}\leq \frac{x^2}{a}$, valid for $x,a>0$. Letting $M\to\infty$, and then $n\to\infty$, we obtain that the second term in \eqref{twotherm1} converges in $L_1(\hat{\mathbb{P}})$ to zero. Thus, we conclude the first convergence in \eqref{limits_1}.\\
The second convergence follows from Lemma 3.1 in \cite{Krylov-conv}.

By applying properties \textbf{2)} and \textbf{3)} together with the previous limits, and then passing to the limit in~\eqref{eq5}, we obtain

\begin{equation} 
\begin{cases}
\mathrm{d}\hat{X}_t =\displaystyle\int_A b(t, \hat{X}_t,\mathcal{L}_{\hat{X}_t}, a) \hat{q}_t(\mathrm{d}a)\mathrm{d}t +  \sigma(t,\hat{ X}_t,\mathcal{L}_{\hat{X}_t}) \mathrm{d}\hat{B}_t + \mathrm{d}\hat{K}_t, & t \in [0, T], \\
\hat{X}_t \geq 0 \quad \text{a.s}, \\
\displaystyle\int_0^T \hat{X}_t \mathrm{d}\hat{K}_t = 0 \quad \hat{\mathbb{P}}-\text{a.s.} 
\end{cases}
\end{equation}
The convergence 
\begin{equation*}
0 = \int_0^T \hat{X}^n_t \, \mathrm{d}\hat{K}^n_t \to \int_0^T \hat{X}_t \, \mathrm{d}\hat{K}_t \quad \hat{\mathbb{P}}-\text{a.s.}
\end{equation*}
follows from property \textbf{3)} and the continuity of the mapping 
\begin{equation*}
(x,k) \mapsto \int_0^T x_t \, \mathrm{d}k_t,
\end{equation*}
under the uniform convergence topology.

To complete the proof of Theorem \ref{theo1}, it remains to verify that $\hat{q}$ is indeed an optimal control.\\
By repeating the same arguments as above and using assumption \textbf{(A.3)} together with the dominated convergence theorem, we obtain the desired convergences.
\begin{equation*}
\lim_{n \to \infty} \hat{\mathbb{E}}\left[ \int_0^T \int_A^{} f(s, \hat{X^n}_s,\mathcal{L}_{\hat{X}_s}, a)\hat{q^n}_s(\mathrm{d}a) \mathrm{d}s \right] = \hat{\mathbb{E}}\left[ \int_0^T \int_A^{} f(s, \hat{X}_s,\mathcal{L}_{\hat{X}_s}, a)\hat{q}_s(\mathrm{d}a) \mathrm{d}s \right]
\end{equation*}
\begin{equation*}
\lim_{n \to \infty} \hat{\mathbb{E}}\left[ g(\hat{X^n}_T, \mathcal{L}_{\hat{X^n}_T}) +\int_0^T c(s,\hat{X}^n_s,\mathcal{L}_{\hat{X^n}_s} )\mathrm{d}\hat{K}^n_s\right] = \hat{\mathbb{E}}\left[ g(\hat{X}_T, \mathcal{L}_{\hat{X}_T}) + \int_0^T c(s,\hat{X}_s, \mathcal{L}_{\hat{X}_s} )\mathrm{d}\hat{K}_s\right]
\end{equation*}
From properties \textbf{1)–3)} and the previous convergence results, we conclude that
\begin{align*}
\inf_{r \in \mathcal{R}} J(r) &= \lim_{n \to \infty} J(r^n) \\
&= \lim_{n \to \infty} \mathbb{E}^{\mathbb{P}^n}\left[ \int_0^T \int_A^{} f(s, X^n_s,\mathcal{L}_{X^n_s}, a)q^n_s(\mathrm{d}a) \mathrm{d}s + \int_0^T c(s, X^n_s, \mathcal{L}_{X^n_s})\mathrm{d}K^n_s + g(X^n_T, \mathcal{L}_{X^n_T}) \right]\\
&= \lim_{n \to \infty} \hat{\mathbb{E}}\left[ \int_0^T \int_A^{} f(s, \hat{X}^n_s,\mathcal{L}_{\hat{X^n}_s}, a)\hat{q}^n_s(\mathrm{d}a) \mathrm{d}s + \int_0^T c(s, \hat{X^n}_s, \mathcal{L}_{\hat{X^n}_s})\mathrm{d}\hat{K}^n_s + g(\hat{X}^n_T, \mathcal{L}_{\hat{X^n}_T}) \right] \\
&= \hat{\mathbb{E}}\left[ \int_0^T \int_A^{} f(s, \hat{X}_s,\mathcal{L}_{\hat{X}_s}, a)\hat{q}_s(\mathrm{d}a) \mathrm{d}s + \int_0^T c(s, \hat{X}_s, \mathcal{L}_{\hat{X}_s})\mathrm{d}\hat{K}_s + g(\hat{X}_T, \mathcal{L}_{\hat{X}_T}) \right].
\end{align*}
Theorem \ref{theo1} is thus proved.
\end{proof}
\subsection{Proof of Corollary \ref{theo2}}  
\begin{proof}
Let  
\begin{equation*}
\hat{r}=(\hat{\Omega}, \hat{\mathcal{F}},\hat{\mathbb{F}}, \hat{\mathbb{P}}, \hat{X}, \hat{B}, \hat{K}, \hat{q})
\end{equation*}
be the optimal relaxed control constructed in the previous proof.

We define, for each $(t, \omega) \in [0,T] \times \hat{\Omega}$,
\begin{equation*}
\hat{c}(t,\omega) := \left( \int_A b(t, \hat{X}_t,\mathcal{L}_{\hat{X}_t}, a)\,\hat{q}_t(\mathrm{d}a), \; \int_A f(t, \hat{X}_t,\mathcal{L}_{\hat{X}_t}, a)\,\hat{q}_t(\mathrm{d}a) \right).
\end{equation*}
By Assumption~\textbf{(A.4)}, the set $\mathcal{S}(t,x, \mu)$ is closed and convex, and hence 
\begin{equation*}
\hat{c}(t,\omega) \in \mathcal{S}(t, \hat{X}_t, \mathcal{L}_{\hat{X}_t}).
\end{equation*}
Applying the measurable selection theorem (Theorem~\textbf{A.9} in~\cite{hauss}), there exists an $A$-valued progressively measurable process $\hat{u}$ such that, $\hat{\mathbb{P}}$-almost surely, for all $t \in [0,T]$,
\begin{equation*}
\hat{c}(t,\omega) = \left( b\bigl(t, \hat{X}(t,\omega),\mathcal{L}_{\hat{X}_t}. \hat{u}(t,\omega)\bigr), \; f\bigl(t, \hat{X}(t,\omega),\mathcal{L}_{\hat{X}_t}, \hat{u}(t,\omega)\bigr) \right).
\end{equation*}
Consequently, for every $t \in [0,T]$,
\begin{equation*}
\int_A f(t, \hat{X}_t,\mathcal{L}_{\hat{X}_t}, a)\,\hat{q}_t(\mathrm{d}a) = f\bigl(t, \hat{X}_t, \mathcal{L}_{\hat{X}_t},\hat{u}_t\bigr),
\qquad
\int_A b(t, \hat{X}_t,\mathcal{L}_{\hat{X}_t}, a)\,\hat{q}_t(\mathrm{d}a) = b\bigl(t, \hat{X}_t,\mathcal{L}_{\hat{X}_t}, \hat{u}_t\bigr).
\end{equation*}

Finally, the pair $(\hat{X}, \hat{K})$ satisfies the reflected stochastic differential system
\begin{equation*}
\begin{cases}
\mathrm{d}\hat{X}_t = b(t, \hat{X}_t, \mathcal{L}_{\hat{X}_t},\hat{u}_t)\,\mathrm{d}t + \sigma(t, \hat{X}_t, \mathcal{L}_{\hat{X}_t})\,\mathrm{d}\hat{B}_t + \mathrm{d}\hat{K}_t, & t \in [0,T], \\[4pt]
\hat{X}_t \ge 0, \quad \hat{\mathbb{P}}\text{-a.s.}, \\[4pt]
\displaystyle \int_0^T \hat{X}_t\,\mathrm{d}\hat{K}_t = 0, \quad \hat{\mathbb{P}}\text{-a.s.}
\end{cases}
\end{equation*}

Since the cost functional $J$ depends on the control only through the functions $b$ and $f$, it follows that
\begin{equation*}
J(\hat{r}) = J(\hat{\alpha}),
\end{equation*}
where
\begin{equation*}
\hat{\alpha} = (\hat{\Omega}, \hat{\mathcal{F}}, \hat{\mathbb{F}}, \hat{\mathbb{P}}, \hat{X}, \hat{B}, \hat{K}, \hat{u}).
\end{equation*}
Therefore, $\hat{\alpha}$ is an \emph{optimal strict control} associated with the \emph{optimal relaxed control} $\hat{r}$.
\end{proof}
\subsection{Proof of Theorem \ref{approximatio1}}
To prove the approximation Theorem \ref{approximatio1}, we begin by stating the chattering lemma as established in \cite{meleard}, page 196.

\begin{lemma}\label{chattering lemma}
Let $q$ be a relaxed control defined on a filtered probability space $(\Omega, \mathcal{F}, \mathbb{F}, \mathbb{P})$. Then there exists a sequence of $\mathbb{F}$-adapted processes $(u^n)$, taking values in $A$, such that the sequence of random measures $\delta_{u^n_t}(\mathrm{d}a)\, \mathrm{d}t$ converges almost surely to $q_t(\mathrm{d}a)\, \mathrm{d}t$ in $\mathcal{V}$.
\end{lemma}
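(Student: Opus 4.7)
The plan is to build $(u^n)$ by time discretization combined with atomic approximation of $q_t$, then ``chattering'' across atoms within each subinterval with lengths proportional to the atomic weights. For each $n \geq 1$, set $t_k^n := kT/2^n$, and define the \emph{lagged block average}
\[
\bar q^n_k := \frac{2^n}{T}\int_{t_{k-1}^n}^{t_k^n} q_s\,\mathrm{d}s \in P(A), \qquad k \geq 1,
\]
with $\bar q^n_0 := \delta_{a_0}$ for some fixed $a_0 \in A$. By condition ii') of Definition~\ref{def of relaxed control}, each $\bar q^n_k$ depends only on $q$ restricted to $[0,t_k^n]$ and is therefore $\mathcal{F}_{t_k^n}$-measurable. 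Since $A$ is a compact metric space, fix for each $n$ a finite Borel partition $\{A_i^n\}_{i=1}^{N_n}$ of $A$ into cells of diameter at most $1/n$, together with representatives $a_i^n \in A_i^n$. On each interval $I_k^n := [t_k^n, t_{k+1}^n)$, subdivide into consecutive sub-intervals of length $\bar q^n_k(A_i^n)\cdot T/2^n$, and set $u^n_t := a_i^n$ on the $i$-th sub-interval. The resulting $A$-valued process is measurable in $t$ and, because on $I_k^n$ both the sub-interval endpoints and the prescribed values are determined by $\bar q^n_k$, it is $\mathbb{F}$-adapted.

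To verify that $\delta_{u^n_t}(\mathrm{d}a)\,\mathrm{d}t \to q_t(\mathrm{d}a)\,\mathrm{d}t$ in $\mathcal{V}$ almost surely, separability of $C([0,T]\times A)$ reduces the task to testing against a countable family of bounded continuous $\varphi:[0,T]\times A \to \mathbb{R}$. For a fixed such $\varphi$, the difference
\[
\int_0^T\!\!\int_A \varphi(s,a)\bigl(\delta_{u^n_s}(\mathrm{d}a) - q_s(\mathrm{d}a)\bigr)\,\mathrm{d}s
\]
telescopes over $k$ into four pieces: (i) replacing $\varphi(s,\cdot)$ by $\varphi(t_k^n,\cdot)$ on each $I_k^n$, controlled by uniform continuity of $\varphi$ on the compact product $[0,T]\times A$; (ii) the exact identity $\int_{I_k^n}\varphi(t_k^n, u^n_s)\,\mathrm{d}s = |I_k^n|\sum_i \bar q^n_k(A_i^n)\,\varphi(t_k^n, a_i^n)$ built into the construction; (iii) the Riemann-sum error $\sum_i \bar q^n_k(A_i^n)\varphi(t_k^n,a_i^n) - \int_A \varphi(t_k^n,a)\,\bar q^n_k(\mathrm{d}a)$, bounded by the modulus of continuity of $\varphi(t_k^n,\cdot)$ over cells of diameter $1/n$; and (iv) the \emph{lag} error $\int_A \varphi(t_k^n,a)\bigl(\bar q^n_k - q_s\bigr)(\mathrm{d}a)$ for $s \in I_k^n$, integrated in $s$ and summed over $k$.

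The main obstacle is piece (iv), which is where adaptedness and convergence compete. A direct averaging over $I_k^n$ would be $\mathcal{F}_{t_{k+1}^n}$-measurable and therefore unusable for defining $u^n_t$ on $I_k^n$; replacing it by the preceding block average on $I_{k-1}^n$ restores adaptedness at the cost of a one-step time lag. Proving that this lag vanishes as $n\to\infty$ amounts to a pathwise Lebesgue differentiation statement for the measurable map $s \mapsto q_s$ with values in the compact Polish space $P(A)$; via a countable dense family in $C(A)$, this reduces to the classical one-dimensional Lebesgue differentiation theorem applied to the scalar functions $s\mapsto \int_A \psi(a)\,q_s(\mathrm{d}a)$, yielding that for every $\omega$ and almost every $s$, $\bar q^n_k \to q_s$ weakly along the $k = k(s,n)$ with $s\in I_k^n$. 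This is the only non-elementary ingredient and, combined with the trivial bounds on (i)–(iii) and bounded convergence in $s$, completes the a.s.\ convergence in $\mathcal{V}$.
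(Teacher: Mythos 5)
Your construction is correct and is essentially the standard chattering argument that the paper does not reproduce but merely cites from M\'el\'eard: dyadic time discretization, a finite partition of $A$, sojourn times on each time block proportional to the weights of a block average taken over the \emph{preceding} interval so that adaptedness is preserved, and a Lebesgue-differentiation (or, alternatively, a simple telescoping) argument to remove the resulting one-step lag. No gaps; the only point worth recording explicitly is that testing the a.s.\ convergence in $\mathcal{V}$ against jointly continuous functions on $[0,T]\times A$ is legitimate because, on measures with fixed time marginal $\mathrm{d}t$, the stable and weak topologies coincide (Jacod--M\'emin), a fact the paper also uses implicitly.
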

Let 
\begin{equation*}
r = (\Omega, \mathcal{F}, \mathbb{F}, \mathbb{P}, B, X, K, q)
\end{equation*}
be a relaxed control, where $(X, K)$ denotes the solution of the RMVSDE~\eqref{def of Relaxed} associated with the control $q$. 
Let $(u^n)_{n \ge 1}$ be a sequence of strict controls approximating the relaxed control $q$, as given by Lemma~\ref{chattering lemma}. 
Denote by $(X^n, K^n)$ the solution of the RMVSDE~\eqref{def of Strict Control}, driven by the Brownian motion $B$, corresponding to the control $u^n$ on the probability space $(\Omega, \mathcal{F}, \mathbb{F}, \mathbb{P})$.\\  
We then define the sequence of admissible strict controls  
\begin{equation*}
\alpha^n := (\Omega, \mathcal{F}, \mathbb{F}, \mathbb{P}, B, X^n, K^n, u^n).
\end{equation*}

\begin{lemma}\label{adjlemma3}
 the following results hold:
\begin{description}
    \item[i)] \(\displaystyle \lim_{n \to \infty} \mathbb{E}^{\mathbb{P}} \left( \sup_{0 \leq t \leq T} |X^n_t - X_t|^2  + \sup_{0 \leq t \leq T} |K^n_t - K_t|^2 \right ) = 0\);
    \item[ii)] \(\displaystyle \lim_{n \to \infty} J(\alpha^n) = J(r)\).
\end{description}
\end{lemma}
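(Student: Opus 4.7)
The plan is to prove (i) by a direct pathwise stability estimate on $|X^n_t-X_t|^2$ --- crucially, both processes are driven by the \emph{same} Brownian motion $B$ on the \emph{same} probability space, which is what makes strong $L^2$-convergence tractable --- and then to deduce (ii) from (i) combined with the chattering convergence and the Lipschitz and continuity assumptions on $f$, $c$ and $g$. Applying It\^o's formula to $|X^n_t-X_t|^2$, the Skorokhod conditions $X^n_s\,\mathrm{d}K^n_s = X_s\,\mathrm{d}K_s = 0$, together with $X^n_s,\,X_s\ge 0$ and the monotonicity of $K^n,K$, give
\begin{equation*}
(X^n_s - X_s)\,\mathrm{d}(K^n_s - K_s) \;=\; -X_s\,\mathrm{d}K^n_s - X^n_s\,\mathrm{d}K_s \;\le\; 0,
\end{equation*}
so the reflection contribution drops out and one obtains
\begin{equation*}
|X^n_t - X_t|^2 \;\le\; 2\int_0^t (X^n_s - X_s)\,\Delta b^n_s\,\mathrm{d}s + 2\int_0^t (X^n_s - X_s)\,\Delta\sigma^n_s\,\mathrm{d}B_s + \int_0^t |\Delta\sigma^n_s|^2\,\mathrm{d}s,
\end{equation*}
with $\Delta b^n_s := b(s,X^n_s,\mathcal{L}_{X^n_s},u^n_s) - \int_A b(s,X_s,\mathcal{L}_{X_s},a)\,q_s(\mathrm{d}a)$ and $\Delta\sigma^n_s$ the analogous diffusion difference.

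Next, I split $\Delta b^n_s = \Delta^L_s + \psi^n_s$, where the Lipschitz part is $\Delta^L_s := b(s,X^n_s,\mathcal{L}_{X^n_s},u^n_s) - b(s,X_s,\mathcal{L}_{X_s},u^n_s)$ and the \emph{chattering residual} is $\psi^n_s := b(s,X_s,\mathcal{L}_{X_s},u^n_s) - \int_A b(s,X_s,\mathcal{L}_{X_s},a)\,q_s(\mathrm{d}a)$. Using \textbf{(A.2)}, the inequality $W_2(\mathcal{L}_{X^n_s},\mathcal{L}_{X_s})^2 \le \mathbb{E}|X^n_s - X_s|^2$, the Burkholder--Davis--Gundy inequality and Young's inequality, the $\Delta^L$- and $\Delta\sigma^n$-contributions are dominated by $C\int_0^t \mathbb{E}[\sup_{r\le s}|X^n_r - X_r|^2]\,\mathrm{d}s$, so by Gronwall's lemma it suffices to prove
\begin{equation*}
\epsilon_n \;:=\; \mathbb{E}\bigl[\sup_{0\le t\le T}\bigl|\textstyle\int_0^t (X^n_s - X_s)\psi^n_s\,\mathrm{d}s\bigr|\bigr] \;\longrightarrow\; 0.
\end{equation*}
Set $\Psi^n_t:=\int_0^t \psi^n_s\,\mathrm{d}s$. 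By Lemma~\ref{chattering lemma}, $\delta_{u^n_s}(\mathrm{d}a)\mathrm{d}s \to q_s(\mathrm{d}a)\mathrm{d}s$ a.s.\ in $\mathcal{V}$, and a truncation argument on $\{|X_s|<M\}$ identical to the one used in the proof of Theorem~\ref{theo1} (together with the linear growth of $b$ and the $L^4$-estimate of Lemma~\ref{lemma1}) yields $\mathbb{E}[\sup_{t\le T}|\Psi^n_t|^2] \to 0$, the upgrade from pointwise-in-$t$ to uniform convergence being provided by the equi-Lipschitz continuity of $\Psi^n$ via Arzel\`a--Ascoli, followed by dominated convergence. Integration by parts in time then gives
\begin{equation*}
\int_0^t (X^n_s - X_s)\psi^n_s\,\mathrm{d}s = (X^n_t - X_t)\Psi^n_t - \int_0^t \Psi^n_s\,\Delta b^n_s\,\mathrm{d}s - \int_0^t \Psi^n_s\,\Delta\sigma^n_s\,\mathrm{d}B_s - \int_0^t \Psi^n_s\,\mathrm{d}(K^n_s - K_s),
\end{equation*}
and each right-hand term is estimated by Cauchy--Schwarz or BDG combined with the uniform $L^4$ bounds, each acquiring a factor $(\mathbb{E}\sup_t|\Psi^n_t|^2)^{1/2} \to 0$. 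Hence $\epsilon_n\to 0$ and Gronwall closes the estimate for $X^n$.

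The convergence of $K^n$ then follows from the identity $K^n_t - K_t = (X^n_t - X_t) - \int_0^t \Delta b^n_s\,\mathrm{d}s - \int_0^t \Delta\sigma^n_s\,\mathrm{d}B_s$ combined with BDG and the bounds just obtained. For (ii), I decompose $J(\alpha^n)-J(r)$ into its $f$-, $c$- and $g$-contributions: the $g$-term vanishes by continuity of $g$, the $L^2$-convergence $X^n_T\to X_T$ and uniform integrability from Lemma~\ref{lemma1}; the $c$-term vanishes by (i) together with the continuity of the Stieltjes map $(x,k)\mapsto\int_0^T c(s,x_s,\mathcal{L}_{x_s})\,\mathrm{d}k_s$ under uniform convergence (as already used in the proof of Theorem~\ref{theo1}); and the $f$-term is handled exactly like the drift, with \textbf{(A.3)} in place of \textbf{(A.2)}. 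The main obstacle is the coupling hidden inside the residual $\int_0^t (X^n_s - X_s)\psi^n_s\,\mathrm{d}s$: $\psi^n$ converges only in the stable (integrated) sense and is paired with the very quantity $X^n - X$ that we seek to control, so a naive Cauchy--Schwarz is circular; the only way out is the time-integration-by-parts trick that transfers the chattering convergence onto the primitive $\Psi^n$, which does converge to zero uniformly in $t$ in $L^2(\mathbb{P})$.
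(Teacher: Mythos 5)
Your argument is correct, and at the decisive step it takes a genuinely different (and more robust) route than the paper. Both proofs start identically: It\^o's formula applied to $|X^n_t-X_t|^2$, elimination of the reflection terms through the Skorokhod conditions, BDG plus Young for the martingale part, and the Lipschitz hypothesis \textbf{(A.2)} together with $W_2(\mathcal{L}_{X^n_s},\mathcal{L}_{X_s})^2\le\mathbb{E}|X^n_s-X_s|^2$ to produce the Gronwall term. The divergence is in the treatment of the chattering residual. The paper also decouples it from $X^n-X$ by Young's inequality, but is then left with the quantity
\begin{equation*}
\mathbb{E}\left[\int_0^T\Big|\int_A b(t,X_t,\mathcal{L}_{X_t},a)\,\big(q_t-\delta_{u^n_t}\big)(\mathrm{d}a)\Big|^2\,\mathrm{d}t\right],
\end{equation*}
i.e.\ with the modulus \emph{inside} the time integral, and argues that it vanishes because the chattering lemma would give a.e.-in-$t$ convergence of $\int_A b\,\delta_{u^n_t}(\mathrm{d}a)$ to $\int_A b\,q_t(\mathrm{d}a)$; stable convergence of $\delta_{u^n_t}(\mathrm{d}a)\mathrm{d}t$ only controls time-integrated signed functionals, and pointwise-in-$t$ weak convergence of $\delta_{u^n_t}$ to $q_t$ cannot hold when $q_t$ is not a Dirac (in the paper's own Example~\ref{Example3}, with $b(a)=a$, the displayed quantity equals $T$ for every $n$), so that step of the paper is the fragile one. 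Your proof keeps the residual signed, transfers the stable convergence of Lemma~\ref{chattering lemma} onto the primitive $\Psi^n_t=\int_0^t\psi^n_s\,\mathrm{d}s$ (truncation on $\{|X_s|\le M\}$, equi-Lipschitz continuity in $t$, dominated convergence, exactly as in the proof of Theorem~\ref{theo1}), and then uses integration by parts in time so that every term carries the vanishing factor $(\mathbb{E}\sup_t|\Psi^n_t|^2)^{1/2}$, the other factors being bounded uniformly in $n$ by the $L^4$ estimates of Lemma~\ref{lemma1} (which apply verbatim to $(X^n,K^n)$, including $\mathbb{E}[(K^n_T)^2]$ for the Stieltjes term). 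This preserves the cancellation that makes chattering approximations work, at the cost of a slightly longer argument; the paper's version is shorter but relies on a pointwise convergence that is not available in general. One small correction to your commentary: the obstruction you should name is not circularity of the naive Cauchy--Schwarz (the $\varepsilon$-absorption handles the coupling with $\sup_t|X^n_t-X_t|^2$ perfectly well), but precisely the fact that the absolute value placed inside $\int_0^T\cdot\,\mathrm{d}t$ destroys the oscillatory cancellation, so the resulting quantity need not tend to zero. Parts (i) for $K^n$ and (ii) are handled in your proposal essentially as in the paper, and there the direct stable-convergence argument is legitimate because the cost functional involves the time integral without interior absolute values.
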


\begin{proof}
Applying Itô’s formula to the continuous semimartingale $ |X^n_t - X_t|^2 $, we obtain
\begin{align*}
\mathrm{d}|X^n_t - X_t|^2 
&= 2 (X^n_t - X_t)\Big(b(t,X^n_t,\mathcal{L}_{X^n_t},u^n_t)-\!\int_A b(t,X_t,\mathcal{L}_{X_t},a)q_t(\mathrm{d}a)\Big)\mathrm{d}t  \\
&\quad + 2 (X^n_t - X_t)\left(\sigma(t,X^n_t,\mathcal{L}_{X^n_t})-\sigma(t,X_t,\mathcal{L}_{X_t})\right)\mathrm{d}B_t \\
&\quad + \left(\sigma(t,X^n_t,\mathcal{L}_{X^n_t})-\sigma(t,X_t,\mathcal{L}_{X_t})\right)\left(\sigma(t,X^n_t,\mathcal{L}_{X^n_t})-\sigma(t,X_t,\mathcal{L}_{X_t})\right)^*\mathrm{d}t \\
&\quad + 2 (X^n_t - X_t)\mathrm{d}K^n_t - 2 (X^n_t - X_t)\mathrm{d}K_t.
\end{align*}

By the Skorokhod conditions, we have $(X^n_t - X_t)\mathrm{d}K^n_t \leq 0$ and $(X^n_t - X_t)\mathrm{d}K_t \geq 0$. Therefore,
\begin{align*}
\mathrm{d}|X^n_t - X_t|^2 
&\leq 2 (X^n_t - X_t)\Big(b(t,X^n_t,\mathcal{L}_{X^n_t},u^n_t)-\!\int_A b(t,X_t,\mathcal{L}_{X_t},a)q_t(\mathrm{d}a)\Big)\mathrm{d}t \\
&\quad + 2 (X^n_t - X_t)\left(\sigma(t,X^n_t,\mathcal{L}_{X^n_t})-\sigma(t,X_t,\mathcal{L}_{X_t})\right)\mathrm{d}B_t \\
&\quad + \left(\sigma(t,X^n_t,\mathcal{L}_{X^n_t})-\sigma(t,X_t,\mathcal{L}_{X_t})\right)\left(\sigma(t,X^n_t,\mathcal{L}_{X^n_t})-\sigma(t,X_t,\mathcal{L}_{X_t})\right)^*\mathrm{d}t.
\end{align*}

Taking the supremum over $ [0,T] $ and the expectation, we obtain
\begin{align*}
\mathbb{E}\!\left[\sup_{0 \leq s \leq T} |X^n_s - X_s|^2 \right]
&\leq 2\,\mathbb{E}\!\left[\int_0^T |(X^n_t - X_t)\big(b(t,X^n_t,\mathcal{L}_{X^n_t},u^n_t)-\!\int_A b(t,X_t,\mathcal{L}_{X_t},a)q_t(\mathrm{d}a)\big)|\,\mathrm{d}t\right] \\
&\quad + 2\,\mathbb{E}\!\left[\sup_{0 \leq s \leq T}\left|\int_0^s (X^n_t - X_t)\left(\sigma(t,X^n_t,\mathcal{L}_{X^n_t})-\sigma(t,X_t,\mathcal{L}_{X_t})\right)\mathrm{d}B_t\right|\right] \\
&\quad + \mathbb{E}\!\left[\int_0^T |\sigma(t,X^n_t,\mathcal{L}_{X^n_t})-\sigma(t,X_t,\mathcal{L}_{X_t})|^2\mathrm{d}t\right].
\end{align*}

Using the Lipschitz property of $\sigma$ and the Burkholder--Davis--Gundy inequality, we get
\begin{align*}
&2\,\mathbb{E}\!\left[\sup_{0 \leq s \leq T}\left|\int_0^s (X^n_t - X_t)\left(\sigma(t,X^n_t,\mathcal{L}_{X^n_t})-\sigma(t,X_t,\mathcal{L}_{X_t})\right)\mathrm{d}B_t\right|\right]\\
&\leq 2C\,\mathbb{E}\!\left[\int_0^T |X^n_t - X_t|^2|\sigma(t,X^n_t,\mathcal{L}_{X^n_t})-\sigma(t,X_t,\mathcal{L}_{X_t})|^2\mathrm{d}t\right]^{1/2} \\
&\leq \varepsilon\,\mathbb{E}\!\left[\sup_{0 \leq s \leq T} |X^n_s - X_s|^2\right]
 + \frac{1}{\varepsilon}\,\mathbb{E}\!\left[\int_0^T |\sigma(t,X^n_t,\mathcal{L}_{X^n_t})-\sigma(t,X_t,\mathcal{L}_{X_t})|^2\mathrm{d}t\right],
\end{align*}
where we used the standard inequality $2ab \leq \varepsilon a^2 + \frac{b^2}{\varepsilon}$.

Similarly,
\begin{align*}
&2\,\mathbb{E}\!\left[\int_0^T |(X^n_t - X_t)\big(b(t,X^n_t,\mathcal{L}_{X^n_t},u^n_t)-\!\int_A b(t,X_t,\mathcal{L}_{X_t},a)q_t(\mathrm{d}a)\big)|\,\mathrm{d}t\right]\\
&\leq \varepsilon\,\mathbb{E}\!\left[\sup_{0 \leq s \leq T} |X^n_s - X_s|^2\right]\quad + \frac{1}{\varepsilon}\,\mathbb{E}\!\left[\left(\int_0^T \big|b(t,X^n_t,\mathcal{L}_{X^n_t},u^n_t)-\!\int_A b(t,X_t,\mathcal{L}_{X_t},a)q_t(\mathrm{d}a)\big|\,\mathrm{d}t\right)^2\right].
\end{align*}

Combining the two inequalities above and choosing $\varepsilon > 0$ small enough, we obtain
\begin{align*}
\mathbb{E}\!\left[\sup_{0 \leq s \leq T} |X^n_s - X_s|^2 \right]
&\leq C\,\mathbb{E}\!\left[\int_0^T |X^n_t - X_t|^2\mathrm{d}t\right] \\
&\quad + C\,\mathbb{E}\!\left[\left(\int_0^T \big|b(t,X^n_t,\mathcal{L}_{X^n_t},u^n_t)-\!\int_A b(t,X_t,\mathcal{L}_{X_t},a)q_t(\mathrm{d}a)\big|\,\mathrm{d}t\right)^2\right].
\end{align*}

Finally, by the Lipschitz continuity of $b$, we have
\begin{align*}
\mathbb{E}\!\left[\sup_{0 \leq s \leq T} |X^n_s - X_s|^2 \right]
&\leq C\,\mathbb{E}\!\left[\int_0^T |X^n_t - X_t|^2\mathrm{d}t\right] \\
&\quad + C\,\mathbb{E}\!\left[\left(\int_0^T \big|b(t,X_t,\mathcal{L}_{X^n_t},u^n_t)-\!\int_A b(t,X_t,\mathcal{L}_{X_t},a)q_t(\mathrm{d}a)\big|^2\,\mathrm{d}t\right)\right]\\
&=: C\,\mathbb{E}\!\left[\int_0^T |X^n_t - X_t|^2\mathrm{d}t\right] + CI^n.
\end{align*}

We have, 
\begin{align*}
I^n =& \mathbb{E} \left(\int_{0}^{T}\mathds{1}_{{\{|X_{t}|> M\}}} \left| \int_{A}b(t, X_t, \mathcal{L}_{X_t},a)q_t(\mathrm{d}a)-\int_{A}^{}b(t, X_t,\mathcal{L}_{X_t}, a) \delta_{u^n_t}(\mathrm{d}a)\right|^2 \mathrm{d}t \right)\\
&+\mathbb{E} \left(\int_{0}^{T}\mathds{1}_{{\{|X_{t}|\leq M\}}}^{} \left| \int_{A}b(t, X_t,\mathcal{L}_{X_t}, a)q_t(\mathrm{d}a)-\int_{A}^{}b(t, X_t,\mathcal{L}_{X_t}. a) \delta_{u^n_t}(\mathrm{d}a)\right|^2 \mathrm{d}t \right)\\
&:= I^n_1 + I^n_2.
\end{align*}
Since the mapping \((s,a) \mapsto b(t, X_t,\mathcal{L}_{X_t}, a) \mathds{1}_{\{|X_t| \leq M\}}\) is bounded and measurable, and continuous in $a$, it follows from Lemma~\ref{chattering lemma} that
\begin{equation*}
\mathds{1}_{\{|X_t| \leq M\}} \left| \int_{A} b(t, X_t,\mathcal{L}_{X_t}, a) \, q_t(\mathrm{d}a) - \int_{A} b(t, X_t,\mathcal{L}_{X_t}, a) \, \delta_{u_t^n}(\mathrm{d}a) \right|^2 \to 0 \quad \text{a.s.}.
\end{equation*}
Moreover, by assumption \textbf{(A.1)}, the above expression is uniformly bounded as
\begin{equation*}
\mathds{1}_{\{|X_t| \leq M\}} \left| \int_{A} b(t, X_t,\mathcal{L}_{X_t}, a) \, q_t(\mathrm{d}a) - \int_{A} b(t, X_t,\mathcal{L}_{X_t}, a) \, \delta_{u_t^n}(\mathrm{d}a) \right|^2 \leq C(1 + M^2).
\end{equation*}
Applying the Dominated Convergence Theorem with respect to the product measure $\mathbb{P} \times \mathrm{d}t$, we conclude that $I_2^n \to 0$.

On the other hand, using the quadratic growth condition on $b$, we obtain the following bound
\begin{align*}
I^n_1 &\leq 2\mathbb{E} \left( \int_0^T \mathds{1}_{\{|X_t| > M\}} \left| \int_{A} b(t, X_t,\mathcal{L}_{X_t}, a)\, q_t(\mathrm{d}a) \right|^2 + \left| \int_{A} b(t, X_t,\mathcal{L}_{X_t}, a)\, \delta_{u^n_t}(\mathrm{d}a) \right|^2 \mathrm{d}t \right)\\
&\leq  2 \mathbb{E} \left( \int_0^T \mathds{1}_{\{|X_t| > M\}} (1 + |X_t|^2) \, \mathrm{d}t \right).
\end{align*}
Note that
\begin{equation*}
\mathds{1}_{\{|X_t| > M\}} (1 + |X_t|^2) \xrightarrow[M \to \infty]{} 0 \quad \mathbb{P}\times \mathrm{d}t.\text{a.s.},
\end{equation*}
and this expression is dominated by the integrable function $1 + |X_t|^2 \in L^1(\mathbb{P} \times \mathrm{d}t) $. Therefore, by the Dominated Convergence Theorem,
\begin{equation*}
\lim_{M \to \infty} \mathbb{E} \left( \int_0^T \mathds{1}_{\{|X_t| > M\}} (1 + |X_t|^2) \, \mathrm{d}t \right) = 0.
\end{equation*}

Since this upper bound is independent of $ n$, we can pass to the limit in both $ M \to \infty $ and $n \to \infty $, concluding that
\begin{equation*}
I^n_1 \to 0.
\end{equation*}

Finally, by applying Gronwall’s inequality, we obtain the desired result
\begin{equation}
\mathbb{E}\!\left[\sup_{0 \leq s \leq T} |X^n_s - X_s|^2 \right] \to 0.
\end{equation}

On the other hand, we have 
\begin{equation*}
K^n_s - K_s 
= \int_{0}^{s} \big(b(t,X^n_t,\mathcal{L}_{X^n_t},u^n_t) - \!\int_A b(t,X_t,\mathcal{L}_{X_t},a)q_t(\mathrm{d}a)\big)\mathrm{d}t 
+ \int_{0}^{s} \big(\sigma(t,X^n_t,\mathcal{L}_{X^n_t}) - \sigma(t,X_t,\mathcal{L}_{X_t})\big)\mathrm{d}B_t.
\end{equation*}
Using the same arguments as in the previous part, we conclude that point \textbf{(i)} holds.

For point \textbf{(ii)}, we have
\begin{align*}
J(\alpha^n) - J(r) 
=&\ \mathbb{E} \Bigg[ \int_{0}^{T} \Big( f(t, X^n_t,\mathcal{L}_{X^n_t}, u^n_t) - \int_{A} f(t, X_t,\mathcal{L}_{X_t}, a)\, q_t(\mathrm{d}a) \Big) \mathrm{d}t \\
&\quad +\big( \int_{0}^{T} c(t, X^n_t,\mathcal{L}_{X^n_t})\, \mathrm{d}K^n_t - \int_{0}^{T} c(t, X_t,\mathcal{L}_{X_t})\mathrm{d}K_t\big) 
+ \big( g(X^n_T,\mathcal{L}_{X^n_T}) - g(X_T,\mathcal{L}_{X_T}) \big) \Bigg].
\end{align*}

Using the result established in point \textbf{(i)} and the same line of reasoning as in the previous subsection, we conclude that
\begin{equation*}
J(\alpha^n) - J(r) \longrightarrow 0 \quad \text{as } n \to \infty.
\end{equation*}
\end{proof}

\begin{proof}[Proof of Theorem~\ref{approximatio1}]
Let $ \hat{r} $ be the optimal relaxed control guaranteed by Theorem~\ref{theo2}. 
Then, by Lemma~\ref{adjlemma3}, there exists a sequence of strict controls $ \{\hat{\alpha}^n\}_{n \ge 1} $ such that
\begin{equation*}
\lim_{n \to \infty} J(\hat{\alpha}^n) = J(\hat{r}) = \inf_{r \in \mathcal{R}} J(r) \leq \inf_{\alpha \in \mathcal{U}} J(\alpha).
\end{equation*}
This completes the proof.
\end{proof}

\section*{Appendix}

\begin{lemma}(Theorem 7.3 \cite{belg}).
\label{Tights}
Let $\{P_n\}$ be probability measures on $\mathcal{C} :=\mathcal{C}([0,T], \mathbb{R})$, the space of continuous functions on $[0,T]$, equipped with the uniform topology.\\
The sequence $\{P_n\}$ is tight if and only if the following two conditions are satisfied:
\begin{enumerate}
\item For every positive $ \nu $, there exist constants $a$ and $n_0$ such that a specific condition holds for all $n \geq n_0$.
\begin{equation}
P^n(x : |x(0)| \geq a ) \leq \nu.
\end{equation}
\item For every positive $\epsilon$ and $\nu $, there exist a constant $\delta$, where $0 < \delta < 1$, and a value $n_0$ such that another condition holds for all $n \geq n_0$,
\begin{equation}
P^n(x : \sup_{|t-s| \leq \delta} |x(t)-x(s)| \geq \epsilon ) \leq \nu.
\end{equation}
\end{enumerate}
\end{lemma}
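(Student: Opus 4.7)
The plan is to apply the classical Arzelà--Ascoli characterization of relatively compact subsets of $\mathcal{C}([0,T],\mathbb{R})$ together with Prokhorov's theorem, which is available because $\mathcal{C}([0,T],\mathbb{R})$ under the uniform topology is a Polish space. Denote by $w(x,\delta):=\sup_{|t-s|\leq\delta}|x(t)-x(s)|$ the modulus of continuity. Recall that a set $A\subset\mathcal{C}([0,T],\mathbb{R})$ has compact closure iff $\sup_{x\in A}|x(0)|<\infty$ and $\lim_{\delta\downarrow 0}\sup_{x\in A}w(x,\delta)=0$. By Prokhorov's theorem, tightness of $\{P^n\}$ is equivalent to the existence, for every $\nu>0$, of a compact set $K_\nu\subset\mathcal{C}$ with $P^n(K_\nu)\geq 1-\nu$ for all $n$.

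For the \emph{necessity} direction, fix $\nu>0$ and take such a compact $K_\nu$. Setting $a:=1+\sup_{x\in K_\nu}|x(0)|<\infty$, the inclusion $\{x:|x(0)|\geq a\}\subset K_\nu^c$ immediately yields condition (1). The equicontinuity of $K_\nu$ guarantees, for every $\epsilon>0$, a $\delta\in(0,1)$ with $\{x:w(x,\delta)\geq\epsilon\}\subset K_\nu^c$, yielding condition (2).

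For the \emph{sufficiency} direction, fix $\nu>0$ and construct a single relatively compact set carrying at least $1-\nu$ of mass uniformly in $n$. Using (1) with tolerance $\nu/2$, select $a>0$ and $n_0^{(0)}$ with $P^n(|x(0)|\geq a)\leq\nu/2$ for $n\geq n_0^{(0)}$. For each integer $k\geq 1$, apply (2) with $\epsilon=1/k$ and tolerance $\nu/2^{k+1}$ to produce $\delta_k\in(0,1)$ and $n_0^{(k)}$ with $P^n(w(x,\delta_k)\geq 1/k)\leq\nu/2^{k+1}$ for $n\geq n_0^{(k)}$. Define
$$\mathcal{K}:=\bigl\{x:|x(0)|\leq a,\ w(x,\delta_k)\leq 1/k\ \text{for all}\ k\geq 1\bigr\}.$$
Then $\overline{\mathcal{K}}$ is compact by Arzelà--Ascoli, and a union bound gives $P^n(\mathcal{K}^c)\leq\nu/2+\sum_{k\geq 1}\nu/2^{k+1}=\nu$ for every $n$ exceeding each $n_0^{(k)}$.

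The main obstacle is that $\sup_k n_0^{(k)}$ could be infinite, preventing a threshold uniform in $n$. To resolve this I would handle the finitely many exceptional indices separately: each individual $P^n$ is tight on the Polish space $\mathcal{C}$, so each admits its own compact set of mass $\geq 1-\nu$, which is equicontinuous and bounded at $0$. By enlarging $a$ and shrinking each $\delta_k$ appropriately, these finitely many compact sets can be absorbed into a single relatively compact $\widetilde{\mathcal{K}}\supset\mathcal{K}$ with $P^n(\widetilde{\mathcal{K}}^c)\leq\nu$ for \emph{every} $n$. Prokhorov's theorem then concludes tightness of $\{P^n\}$, completing the equivalence.
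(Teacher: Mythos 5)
The paper does not prove this lemma at all: it is quoted verbatim (in the Appendix) as Theorem~7.3 of Billingsley \cite{belg}, so there is no in-paper argument to compare against. Your proof is the standard one for that theorem --- Prokhorov's theorem on the Polish space $\mathcal{C}([0,T],\mathbb{R})$ combined with the Arzelà--Ascoli characterization of compactness --- and it is essentially correct, including your identification of the only delicate point, namely that the thresholds $n_0^{(k)}$ need not be bounded in $k$. Just be careful with the phrase ``finitely many exceptional indices'': globally there may be infinitely many, and the fix must be applied \emph{per $k$} --- for each fixed $k$ the exceptional set $\{n < n_0^{(k)}\}$ is finite, and since $w(x,\delta)\downarrow 0$ for every continuous $x$, continuity of measure lets you shrink that single $\delta_k$ so the bound $P^n(w(x,\delta_k)\geq 1/k)\leq \nu/2^{k+1}$ holds for those finitely many $n$ as well; doing this for every $k$ (and similarly enlarging $a$) gives conditions (1)--(2) with $n_0=1$, after which your set $\mathcal{K}$ already works for all $n$.
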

\begin{lemma}(Skorokhod Selection Theorem \cite{ikeda}) \label{lemma-representation} 
Let $E, \mathcal{E})$ be a complete separable metric space, and let $P$ and $P_n$, $n = 1, 2, \dots$, be probability measures on $(E, \mathcal{B}(E))$ such that $P_n$ converges weakly to $P$ as $n \to \infty$. Then, on a probability space $(\tilde{\Omega}, \tilde{\mathcal{F}}, \tilde{P})$, there exist $E$-valued random variables $X_n$, $n = 1, 2, \dots$, and $X$ such that:
\begin{enumerate}
\item \(P = \tilde{P} \circ X^{-1}\),
\item \(P_n = \tilde{P} \circ X_n^{-1}\), for each $n \geq 1$,
\item \(X_n \xrightarrow{n \to \infty} X\), \(\tilde{P}\)-a.s.
\end{enumerate}
\end{lemma}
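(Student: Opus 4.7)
The plan is to follow Skorokhod's classical construction on the Polish space $E$, realizing all the random variables on a single rich probability space $(\tilde{\Omega},\tilde{\mathcal{F}},\tilde{P})$—concretely, a countable product of unit intervals with Lebesgue product measure—whose independent uniform coordinates supply the randomness needed for both $X$ and the whole sequence $(X_n)$. The key idea is to approximate $P$ by discrete measures supported on representatives of a nested family of partitions of $E$, and to couple $X_n$ with $X$ through these partitions so that weak convergence of the $P_n$'s upgrades to almost sure convergence of the $X_n$'s.

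First, using separability of $E$, I would fix a countable dense set $\{e_j\}_{j\ge 1}$ and inductively build, for every $k\ge 1$, a finite Borel partition $\mathcal{P}_k=\{E_{k,i}\}_i$ refining $\mathcal{P}_{k-1}$, whose cells have diameter at most $2^{-k}$ and satisfy $P(\partial E_{k,i})=0$. The null-boundary property is achieved by selecting the radii of the defining balls from the cocountable set of radii whose spheres carry zero $P$-mass. By the portmanteau theorem, these null boundaries guarantee $P_n(E_{k,i})\to P(E_{k,i})$ as $n\to\infty$ for each cell. Choosing a representative $x_{k,i}\in E_{k,i}$ and using the first uniform coordinate of $\omega$, I define discrete approximations $X^{(k)}(\omega)$ and $X^{(k)}_n(\omega)$ that equal $x_{k,i}$ on the sub-intervals of $[0,1]$ of lengths $P(E_{k,i})$ and $P_n(E_{k,i})$ respectively, both read off from the \emph{same} coordinate. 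Further independent coordinates, used at each refinement level, restore the exact conditional laws $P(\cdot\mid E_{k,i})$ and $P_n(\cdot\mid E_{k,i})$ inside each cell, producing limits $X$ and $X_n$ with the prescribed marginals. Since the cell index is coupled through the shared uniform, the total-variation distance between the discrete weight vectors bounds $\tilde{P}(X^{(k)}\ne X^{(k)}_n)\to 0$ for every fixed $k$. Combining the deterministic bound $d(X^{(k)},X)\le 2^{-k}$ (and similarly for $X^{(k)}_n$) with a diagonal/Borel--Cantelli argument then delivers $d(X_n,X)\to 0$ $\tilde{P}$-almost surely.

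The main obstacle is twofold: (i) the inductive construction of nested partitions with simultaneously controlled diameter and $P$-null boundaries, which requires careful use of separability together with the fact that each center admits only countably many bad radii; and (ii) organizing the coupling across all refinement levels so that the marginals of the constructed variables remain \emph{exactly} $P$ and $P_n$ while almost sure (not merely in-probability) convergence is preserved—this is why the inner randomization inside each cell must be kept independent of, and disjoint from, the cell-assignment randomization, with only the latter coupled across $n$. A cleaner alternative, if one is willing to invoke deeper structural results, is to use Kuratowski's theorem to identify $E$ with a Borel subset of $[0,1]$, reducing the problem to the real line, where the quantile transform $X(\omega)=F^{-1}(\omega)$, $X_n(\omega)=F^{-1}_n(\omega)$ on $([0,1],\lambda)$ yields the conclusion almost immediately via pointwise a.e.\ convergence of the generalized inverses at continuity points of $F^{-1}$; however this route demands extra care to transfer the resulting convergence back to the original topology of $E$, which is why the direct partition/coupling construction is usually preferred.
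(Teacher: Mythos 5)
This lemma is not proved in the paper at all: it is the classical Skorokhod representation theorem, quoted verbatim in the appendix with a citation to Ikeda--Watanabe, so there is no in-paper argument to compare against. Your sketch follows the standard construction (nested partitions with $P$-null boundaries, coupling of the cell index through a shared uniform, portmanteau to get $P_n(E_{k,i})\to P(E_{k,i})$), which is exactly the route taken in the cited references, so the overall architecture is right.

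That said, three steps as written have genuine gaps. First, a \emph{finite} Borel partition of a Polish space into cells of diameter at most $2^{-k}$ does not exist unless $E$ is totally bounded; you need either countably many cells or a finite partition in which one exceptional ``remainder'' cell has small $P$-measure rather than small diameter. Second, your inner randomization is dangerous: if at each level $k$ you draw a fresh independent point with law $P(\cdot\mid E_{k,i})$, the resulting sequence $X^{(k)}(\omega)$ has no reason to converge as $k\to\infty$, so the ``limits $X$ and $X_n$'' are not defined; the standard fix is to dispense with inner randomization entirely, set $X^{(k)}(\omega)=x_{k,i_k(\omega)}$ (the representative of the level-$k$ cell), note that nestedness plus shrinking diameters makes this sequence Cauchy, and verify that the law of the limit is $P$ because the cells generate $\mathcal{B}(E)$ and $P(E_{k,i})$ equals the length of the corresponding interval. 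Third, the concluding step is not a Borel--Cantelli argument: for fixed $k$ you only know $\tilde{P}\bigl(X^{(k)}_n\neq X^{(k)}\bigr)\to 0$ as $n\to\infty$, and these probabilities need not be summable in $n$, so this yields convergence in probability (or a.s.\ convergence along a subsequence), not a.s.\ convergence of the full sequence. The correct argument is pointwise: for every $\omega$ avoiding the countably many interval endpoints, the convergence of the partial sums $\sum_{j\le i}P_n(E_{k,j})\to\sum_{j\le i}P(E_{k,j})$ forces $X_n(\omega)$ to lie eventually in the same level-$k$ cell as $X(\omega)$, whence $d(X_n(\omega),X(\omega))\le 2^{-k+1}$ for all large $n$ and every $k$. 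With these repairs your proof is the textbook one.
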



\begin{thebibliography}{00}
\bibitem{bahlali1}
K. Bahlali, B. Gherbal, and B. Mezerdi. Existence of optimal controls for systems driven by fbsdes. Systems and Control Letters, 60(5):344–349, 2011.

\bibitem{belg} 
P. Billingsley. Convergence of probability measures. Wiley Series in Probability and Statistics: Probability and Statistics. John Wiley and Sons Inc., New York, second edition, 1999. A Wiley-Interscience Publication.

\bibitem{elkaroui1} 
N. El Karoui, N. Huu Du, and M. Jeanblanc. Compactification methods in the control of degenerate diffusions: Existence of an optimal control. Stochastics: an
international journal of probability and stochastic processes, 20:169–219, 03 1987.

\bibitem{haussman1}
U. Haussmann. Existence of optimal markovian controls for degenerate diffusions, In Stochastic Differential Systems (Bad Honnef, 1985). Lecture Notes in Control and Inform. Sci. 78 171–186. Springer, Berlin.

\bibitem{ikeda}
N. Ikeda and S. Watanabe. Stochastic differential equations and diffusion processes. Elsevier, 2014.

\bibitem{jacod1} 
J. M. J. Jacod. Sur un type de convergence interm\'ediaire entre la convergence en loi et la convergence en probabilit\'e. Seminaire de probabilit\'es de Strasbourg, 15:529–546, 1981.

\bibitem{elkaroui2} 
N. E. Karoui, S. Peng, and M. C. Quenez. Backward stochastic differential equations in finance. Mathematical Finance, 7(1):1–71, 1997.

\bibitem{kushner1} 
H. J. Kushner. Existence results for optimal stochastic controls. Journal of Optimization Theory and Applications, 15:347–359, 1975.
 
\bibitem{meleard}{Méléard. S,: Representation and approximation of martingale measures, in: Stochastic Partial Differential Equations and Their Applications, Springer, Berlin, 188–199 (1992).}

\bibitem{Krylov-conv}
Gyongy, Istvan and Krylov, Nicolai.
\textit{Existence of strong solutions for Itô’s stochastic equations via approximations: revisited.}
\textit{Stochastics and Partial Differential Equations: Analysis and Computations}, 2022.
doi: 10.1007/s40072-022-00273-7.

\bibitem{bism1}{Jean-Michel Bismut,: Conjugate convex functions in optimal stochastic control,Journal of Mathematical Analysis and Applications, 1973, https://doi.org/10.1016/0022-247X(73)90066-8 }

\bibitem{bism2}{Bismut, Jean-Michel:Linear Quadratic Optimal Stochastic Control with Random Coefficients, SIAM Journal on Control and Optimization, 1976,10.1137/0314028}

\bibitem{PengSMP}{Peng, Shige,: A General Stochastic Maximum Principle for Optimal Control Problems,SIAM Journal on Control and Optimization, 1990, https://doi.org/10.1137/0328054}

\bibitem{becker}{Becker, H:On the existence of optimal random controls,Journal of mathematics and mechanics, 1969}

\bibitem{Kushner-Nec}{Kushner, H. J.: Necessary Conditions for Continuous Parameter Stochastic Optimization Problems,SIAM Journal on Control,1972,10.1137/0310041}

\bibitem{Elliot1}{Elliott, Robert J.: The optimal control of diffusions,Applied Mathematics and Optimization, 1990, https://doi.org/10.1007/BF01447329}

\bibitem{BMK}{Bahlali, K., Mezerdi, B.,  Ouknine, Y.(1996). The maximum principle for optimal control of diffusions with non-smooth coefficients. Stochastics and Stochastic Reports, 57(3–4), 303–316.https://doi.org/10.1080/17442509608834065}

\bibitem{el karoui-Dyn}{El Karoui, Nicole and Tan, Xiaolu.Capacities, Measurable Selection and Dynamic Programming Part II: Application in Stochastic Control Problems (2013)}

\bibitem{haussman-lep}{Haussmann, U. G. and Lepeltier, J. P.(1990). On the Existence of Optimal Controls. SIAM Journal on Control and Optimization. 28. 851-902. https://doi.org/10.1137/0328049}

\bibitem{Ferrari}{Giorgio Ferrari. (2019). On a class of singular stochastic control problems for reflected diffusions. Journal of Mathematical Analysis and Applications. 952-979(473). https://doi.org/10.1016/j.jmaa.2019.01.004.}

\bibitem{ContRefl1}{Bayraktar, Erhan and Qiu, Jinniao. (2019). Controlled reflected SDEs and Neumann problem for backward SPDEs. The Annals of Applied Probability. DOI:10.1214/19-AAP1465}

\bibitem{ContRef2}{Ata, Bariş and Harrison, J. M. and Shepp, L. A. (2005). Drift rate control of a Brownian processing system. The Annals of Applied Probability. 1050-5164(15). http://dx.doi.org/10.1214/105051604000000855.  }

\bibitem{hauss}{Haussmann. U.G, Lepeltier. J.P,: On the existence of optimal controls, SIAM J. Control Optim., 28 (4), 851–902 (1990).}

\bibitem{Adams}{ Daniel Adams and Gonçalo {dos Reis} and Romain Ravaille and William Salkeld and Julian Tugaut. (2022). Large Deviations and Exit-times for reflected McKean–Vlasov equations with self-stabilising terms and superlinear drifts,Stochastic Processes and their Applications, 146(264-310),https://doi.org/10.1016/j.spa.2021.12.017 }

\bibitem{Amin}{Mezerdi, Mohamed Amine.(2021). Compactification in optimal control of McKean‐Vlasov stochastic differential equations,Optimal Control Applications and Methods,42(1161-1177),10.1002/oca.2721}

\bibitem{Fangfang}{Ma, Li and Sun, Fangfang and Han, Xinfang.(2024).Controlled Reflected McKean–Vlasov SDEs and Neumann Problem for Backward SPDEs, Mathematics, 12(1050), 10.3390/math12071050}

\bibitem{Shao}{Shao, Jinghai.(2025).Optimal Control Problem for Reflected McKean–Vlasov SDEs, SIAM Journal on Control and Optimization, 63(3112-3138), 10.1137/23M1602206}

\bibitem{Kushner-exp}{Kushner, H.J. and Dupuis, P.(2001).Numerical Methods for Stochastic Control Problems in Continuous Time, Applications of mathematics, Springer,https://books.google.co.ma/books?id=L3PdlasX2FIC }
\end{thebibliography}
\end{document}